\theoremstyle{plain}
\newtheorem{theorem}{Theorem}
\newtheorem{lem}{Lemma}
\newtheorem{cor}{Corollary}
\newtheorem{problem}{Problem}
\newcommand\Conv{\operatorname{Conv}}
\newcommand\Aff{\operatorname{Aff}}
\newcommand\ran{\operatorname{ran}}
\def\R{\mathbb{R}}
\def\N{\mathbb{N}}
\begin{document}

\title[Determining a point lying in a simplex]{Is it possible to determine a point lying in a simplex if we know the distances from the vertices?}
\author{Gy\"orgy P\'al Geh\'er}
\address{Bolyai Institute\\
University of Szeged\\
H-6720 Szeged, Aradi v\'ertan\'uk tere 1., Hungary}
\address{MTA-DE "Lend\"ulet" Functional Analysis Research Group, Institute of Mathematics\\
University of Debrecen\\
H-4010 Debrecen, P.O. Box 12, Hungary}
\email{gehergy@math.u-szeged.hu or gehergyuri@gmail.com}
\urladdr{http://www.math.u-szeged.hu/$\sim$gehergy/}
\keywords{Resolving set, bisector, simplex.}
\subjclass[2010]{Primary: 46C15 Secondary: 15A63, 47A30.}

\begin{abstract}
It is an elementary fact that if we fix an arbitrary set of $d+1$ affine independent points $\{p_0,\dots p_d\}$ in $\R^d$, then the Euclidean distances $\{|x-p_j|\}_{j=0}^d$ determine the point $x$ in $\R^d$ uniquely. 
In this paper we investigate a similar problem in general normed spaces which is motivated by this known fact. 
Namely, we characterize those, at least $d$-dimensional, real normed spaces $(X, \|\cdot\|)$ for which every set of $d+1$ affine independent points $\{p_0,\dots p_d\} \subset X$, the distances $\{\|x-p_j\|\}_{j=0}^d$ determine the point $x$ lying in the simplex $\Conv(\{p_0,\dots p_d\})$ uniquely. 
If $d=2$, then this condition is equivalent to strict convexity, but if $d > 2$, then surprisingly this holds only in inner product spaces.
The core of our proof is some previously known geometric properties of bisectors. 
The most important of these (Theorem \ref{Blaschke_thm}) is re-proven using the fundamental theorem of projective geometry.
\end{abstract}

\maketitle


\section{Introduction}

Let $X$ be a real normed space with norm $\|\cdot\|$, and $R,S \subseteq X$. 
We call $R$ a resolving set for $S$ if for any $s_1, s_2\in S$, the equations $\|r-s_1\| = \|r-s_2\| \; (r\in R)$ imply $s_1 = s_2$. 
We also say that $R$ resolves $S$, and in the literature this notion is also referred to as metric generator or determining set. 
This quite natural notion originally was defined for general metric spaces in \cite{RSbook} in 1953, but it attracted little attention at that time. 
In the theory of finite dimensional normed spaces (or Minkowski spaces) this notion was investigated in \cite{KaSt}. 
Namely, Kalisch and Straus proved that a $d$-dimensional normed space is Euclidean if and only if every subset $A$ which is not contained in a hyperplane is a resolving set for the whole space. 
We note that as a consequence of our results, we will strengthen this theorem in Corollary \ref{KaSt_cor}. 

In 1975, a very similar concept was introduced in graph theory (\cite{GraphEarly1,GraphEarly2}). 
Since then, several papers have been published concerning this direction (see e.g. \cite{Graph1,Graph2,Graph3,Graph4,Graph5,Graph6,Graph7} for some recent developments), and this topic has a wide range of applications for instance in informatics, robotics, biology and chemistry (see e.g. \cite{Appl1,Appl2,Appl3}). 

The notion of resolving sets in metric spaces is naturally related to the characterization of isometries of certain metric spaces. 
Recently, motivated by some problems in quantum mechanics, this notion was implicitly used in \cite{MoNa,MoTi,Na} in order to describe isometries of certain classes of matrices. 
Furthermore, with the help of resolving sets, the author of the present paper gave a new and elementary proof of a famous theorem of Wigner, which is very important in the foundation of quantum physics (see \cite{Ge}).

Also recently, motivated by complex analysis, some basic results on resolving sets in general metric spaces were provided in \cite{BB}. 
The metric dimensions of the hypercube in $\R^d$ and some important geometric spaces were discussed in \cite{Ba, HeMa}. 

In this paper we will consider general real normed spaces. 
For a number $d\in \N, d \geq 2$, we say that a normed space $X$ with $\dim X \geq d$ has the property \eqref{SRSd}, if it satisfies the following condition:
\begin{equation}\label{SRSd}\tag{SRS\textit{d}}
\emph{\text{every set of $d+1$ affine independent points resolves its convex hull.}}
\end{equation}
(Here SRS stands for ''simplex resolving set'').
In this paper we are interested in the problem of characterizing those normed spaces which satisfy the property \eqref{SRSd}. 
It will turn out that those at least two-dimensional normed spaces $X$ in which (SRS2) holds are precisely the strictly convex spaces (Theorem \ref{main2d_thm}). 
After that one would expect that we obtain the same conclusion if we consider \eqref{SRSd} with $d\geq 3$, since there is no immediate reason which suggests otherwise. 
But on the contrary, when $d\geq 3$, an at least $d$-dimensional space $X$ satisfies \eqref{SRSd} if and only if it is an inner product space (Theorem \ref{main>=3d_thm}). 

The characterization of strictly convex and inner product spaces is a classical field of functional analysis. 
There are several characterizations, and several of them were collected in the book of Amir \cite{Am} (see also e.g. \cite{recent3,Ma,recent1,recent2} concerning some recent results). 
We emphasize the well-known Jordan--von Neumann theorem which was proven originally for complex spaces in \cite{JovN}, but as was pointed out there, real spaces can be handled along the same lines (even with some simplifications). 
It states that a normed space $X$ is an inner product space if and only if its norm $\|\cdot\|$ satisfies the parallelogram identity:
\begin{equation}\label{par_eq}
2 (\|x\|^2 + \|y\|^2) = \|x+y\|^2 + \|x-y\|^2 \qquad (x,y\in X).
\end{equation}
This theorem further implies the following, which was also mentioned in \cite{JovN}: if $\dim X > 2$ ($\dim X > 3$, respectively) and the restriction of the norm to any (linear) subspace with dimension two (three, resp.) is Euclidean, then the norm of $X$ comes from an inner product as well. 
Obviously, a similar conclusion holds for strict convexity, which is straightforward from its definition. 
In other words, strict convexity and inner productness are two-dimensional properties. 
We also note that if one replaces the equality sign in \eqref{par_eq} by ''$\geq$'', then this inequality still characterizes inner productness (see \cite{Ma}).

From now on, if we do not say otherwise, $X$ will always denote a real normed space with norm $\|\cdot\|$. 
Whenever we say subspace, we will mean a linear subspace. 
On the other hand, when we consider a translated copy of a subspace, we will call it an affine subspace. 
In particular, we will say line if it is one-dimensional, plane if it is two-dimensional, or hyperplane if its codimension is one. 
We shall make use of the following notation: for every two points $x,y\in X, x\neq y$ let
\[
B(x,y) := \{z\in X\colon \|z-x\| = \|z-y\|\} \subseteq X,
\]
which is usually called the bisector of $x$ and $y$. 
Geometric properties of bisectors in finite dimensional normed spaces yield various deep characterizations of special normed spaces (see e.g. the survey papers \cite{MaSwWe,MaSw}). 
This notion is also naturally related to the study of Voronoi diagrams. 
We clearly have $B(x,y) = B(x-w,y-w) + w$ and $B(0,\lambda x) = \lambda \cdot B(0,x)$ $(w \in X, 0 \neq \lambda \in \R)$. 
It is well-known that this set is a hyperplane if $X$ is a $d$-dimensional Euclidean space. 
Furthermore, if $X$ is a strictly convex $d$-dimensional space, then all bisectors are homeomorphic images of hyperplanes (see \cite[Theorem 23]{MaSw} or \cite{GH}). 
We note that there exist non-strictly convex norms on $\R^d$ such that every bisector is homeomorphic to a hyperplane (see \cite[Example 3]{GH}). 

We state another useful characterization of inner product spaces in the next theorem which involves bisectors. 
This theorem could be obtained from the famous Blaschke--Marchaud theorem (\cite[Anhang VII.]{Bl} or \cite[(12.17)]{Am}), as it was noted in \cite[p. 123]{MaSw}. 

\begin{theorem}\label{Blaschke_thm}
Let $\|\cdot\|$ be a norm on $\R^d$ with $d\geq 3$. 
Every bisector $B(x,y)$ lies between two parallel hyperplanes if and only if the norm is Euclidean. 
\end{theorem}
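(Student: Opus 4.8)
The ``if'' part is immediate: in a Euclidean space $B(x,y)$ is the affine hyperplane through $\tfrac{x+y}{2}$ perpendicular to $x-y$, which certainly lies between two parallel hyperplanes. So the plan concerns the converse, and from now on $\|\cdot\|$ is a norm on $\R^d$ ($d\ge 3$) in which every bisector lies in a \emph{slab} (the region between two parallel hyperplanes).

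The first step is to turn the hypothesis into a statement about the unit ball. For $x\ne 0$ put $g_x(z)=\|z\|-\|z-x\|$, so that $B(0,x)=g_x^{-1}(0)$ and $\R^d\setminus B(0,x)=\{g_x<0\}\cup\{g_x>0\}$, both nonempty and open. If $B(0,x)$ lies in the slab $\{z:a\le\varphi(z)\le b\}$, then the open half-spaces $\{\varphi<a\}$ and $\{\varphi>b\}$ are connected and avoid $B(0,x)$, hence each lies in $\{g_x<0\}$ or in $\{g_x>0\}$. Expanding $\|\cdot\|$ to first order along a ray gives $g_x(tv)\to\inf\{f(x):f\text{ a supporting functional of the unit ball at }v\}$ as $t\to+\infty$, for every unit vector $v$; since this quantity is positive on a nonempty open set of directions (near $x/\|x\|$) and negative on another (near $-x/\|x\|$), while a slab contains rays only in directions forming a great subsphere, neither $\{g_x<0\}$ nor $\{g_x>0\}$ is contained in a slab. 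Hence $\{\varphi<a\}$ and $\{\varphi>b\}$ must sit on opposite sides of $B(0,x)$, and a short argument intersecting half-spaces coming from two nonparallel slabs then shows that the direction of any slab containing $B(0,x)$ is unique. The same first-order expansion identifies the set of asymptotic directions of $B(0,x)$ (limit points of $z_n/\|z_n\|$ with $z_n\in B(0,x)$, $\|z_n\|\to\infty$) with the \emph{shadow boundary} of the unit ball in direction $x$, namely $\{v\in\Sphere:\text{some supporting hyperplane at }v\text{ contains }\R x\}$; since these directions all lie in the hyperplane parallel to the slab, the hypothesis becomes: \textit{every shadow boundary of the unit ball lies in a hyperplane through the origin.}

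The second and most delicate step is to deduce that $\|\cdot\|$ is smooth and strictly convex; I expect this to be the main obstacle. If it fails, some bisector develops a flat or conical piece whose asymptotic directions span more than a hyperplane — exactly what happens already in $\ell^1$ and $\ell^\infty$ — contradicting the previous step; but making this airtight requires treating the non-smooth and non-strictly-convex cases by hand in order to exclude them. Granting it, the duality map $J\colon\Sphere\to\Sphere^{*}$ sending $v$ to its unique supporting functional is a homeomorphism, and the shadow boundary in direction $x$ is exactly $J^{-1}\bigl(\{f:f(x)=0\}\cap\Sphere^{*}\bigr)$, a topological $(d-2)$-sphere. Being contained, by the reformulation, in the $(d-2)$-sphere cut out of $\Sphere$ by a hyperplane through $0$, it must coincide with it; so $J$ carries every central hyperplane-section of $\Sphere$ onto a central hyperplane-section of $\Sphere^{*}$, and since $J$ is a homeomorphism, repeatedly intersecting such sections shows that $J$ maps great $k$-subspheres to great $k$-subspheres for every $k$ down to $1$ and $0$. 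Thus the induced bijection $\mathbb{RP}^{d-1}\to(\mathbb{RP}^{d-1})^{*}$ is a collineation; as $d-1\ge 2$ and the identity is the only field automorphism of $\R$, the fundamental theorem of projective geometry forces it to be projective-linear, i.e.\ $J(v)$ is a nonzero scalar multiple of $z\mapsto\langle Av,z\rangle$ for a fixed $A\in GL_d(\R)$ (after fixing an auxiliary inner product $\langle\cdot,\cdot\rangle$).

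It remains to recover the Euclidean norm from this. Since the supporting functional at $v$ takes the value $\|v\|$ there, $\langle Av,v\rangle\ne 0$ with a sign independent of $v$; replacing $A$ by $-A$ if necessary, the symmetric part of $A$ is positive definite. Writing the gradient field $\nabla\bigl(\tfrac12\|\cdot\|^{2}\bigr)$ — which by the above is everywhere a positive multiple of $A(\cdot)$ and is homogeneous of degree one — and using that its derivative is symmetric, one finds that the antisymmetric part of $A$ has rank at most $2$ and annihilates a nonzero vector (this is where $d\ge 3$ is used), hence vanishes; so $A$ is symmetric positive definite. The same symmetry-of-the-Hessian computation then shows that the scalar $\|v\|^{2}/\langle Av,v\rangle$ is constant, i.e.\ $\|v\|=c\sqrt{\langle Av,v\rangle}$, which is a Euclidean norm. (For $d\ge 4$ one could alternatively establish the case $d=3$ first and then invoke the quoted fact that a normed space of dimension $>3$ all of whose $3$-dimensional subspaces are Euclidean is an inner product space.)
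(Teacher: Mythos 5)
Your outline follows the same route as the paper's proof: translate the slab hypothesis into the statement that every shadow boundary of the unit ball (the set $P_x$ of boundary points admitting a supporting hyperplane containing a line parallel to $x$) is a central hyperplane section, extract a line-preserving map of projective spaces, invoke the fundamental theorem of projective geometry to get a linear $A$, and finally show $A$ is symmetric definite. The architecture is right, but two of your steps are genuine gaps as written.

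First, smoothness. Your duality map $J$ (point $\mapsto$ supporting functional) is single-valued only if the norm is smooth, and you concede that excluding non-smoothness is ``the main obstacle'' without actually resolving it; nothing in the slab hypothesis yields smoothness cheaply. The paper sidesteps this by running the map in the opposite direction: $G([v])$ is the antipodal pair of boundary points whose supporting plane has normal $v$, which is well defined as soon as the norm is strictly convex --- and strict convexity \emph{does} follow cheaply from the hypothesis (Lemma \ref{nonstrictconv_lem} puts a two-dimensional cone inside a bisector, and the auxiliary argument with $\Aff(\{x,y,x+v\})$ shows such a bisector cannot lie in a slab). Line preservation for $G$ is then immediate from the planarity of $P_x$, namely $G(x^\perp)\subseteq W_x$ for a two-dimensional subspace $W_x$; whereas with $J$ you would additionally need every central plane section of $\partial\overline{B}$ to occur as a shadow boundary before you may assert that $J$ carries projective lines into projective lines. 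Smoothness is recovered in the paper only \emph{a posteriori}, on planar sections, from strict convexity together with the bijectivity of $A$.

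Second, the endgame. The inference that the antisymmetric part $N=A-A^{T}$ ``has rank at most $2$ and annihilates a nonzero vector, hence vanishes'' is not valid: every nonzero antisymmetric matrix on $\R^{3}$ has rank exactly $2$ and annihilates a nonzero vector, so this argument cannot distinguish $N=0$ from $N\neq0$. Moreover the symmetry-of-the-Hessian computation presupposes differentiability of $\|\cdot\|^{2}$ (equivalently of the scalar $\|v\|^{2}/\langle Av,v\rangle$), which is again the unproved smoothness. The paper's substitute is concrete: it restricts to a two-dimensional invariant subspace $M$ of $A$, parametrizes $\partial\overline{B}\cap M$ as $c(t)\cdot Av(t)$, proves the differentiability of this planar curve, deduces first that $A$ is diagonalizable with eigenvalues of one sign, and then kills the off-diagonal entry by showing that otherwise $(\log d(t))'$ would have constant sign, contradicting $\pi$-periodicity. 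Some argument of this level of detail is required where you write ``one finds.''
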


We shall give a new proof of this theorem, using Faure's version of the fundamental theorem of projective geometry (see \cite{Fa}). 
We note that the above statement in two dimensions characterizes strictly convex spaces (see Lemma \ref{locstrictconv_lem} and \ref{nonstrictconv_lem}, later).

In the next two sections we will characterize normed spaces with property (SRS2), and \eqref{SRSd} with $d\geq 3$, respectively. 
We note that, by definition, a normed space $X$ with at least $d$ dimensions satisfies \eqref{SRSd} if and only if every $d$-dimensional subspace of $X$ fulfilles \eqref{SRSd} with the inherited norm. 
Let us explain briefly our approach. 
The property \eqref{SRSd} can be rephrased in the following way: 
there is no bisector $B(x,y)$ with $d+1$ affine independent points $p_0,p_1,\dots p_d \in B(x,y)$ such that we have $x,y\in\Conv(\{p_0,p_1,\dots p_d\})$. 
Here $\Conv(Z)$ denotes the convex hull of a given set $Z\subset X$. 
We will work with this latter version, which suggests the exploration of the geometric behaviour of bisectors in some detail. 

Finally, in Section 4, we will raise some problems which, in our opinion, are worth mentioning.


\section{A characterization of strict convexity}

The aim of this section is to prove the following theorem. 

\begin{theorem}\label{main2d_thm}
A real normed space $X$ of dimension at least two satisfies property \textup{(SRS2)} if and only if it is strictly convex.
\end{theorem}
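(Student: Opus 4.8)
The plan is to prove both directions after reducing to $\dim X=2$: by the remark in the introduction, $X$ has property \textup{(SRS2)} iff every two-dimensional subspace does, and $X$ is strictly convex iff every two-dimensional subspace is, so it suffices to treat planes. I would also work throughout with the reformulation noted there: \textup{(SRS2)} holds iff there is no bisector $B(x,y)$ containing three affine independent points $p_0,p_1,p_2$ with $x,y\in\Conv(\{p_0,p_1,p_2\})$.

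For ``strictly convex $\Rightarrow$ \textup{(SRS2)}'' the main step is to locate $B(x,y)$. Translate so that $\tfrac12(x+y)=0$, hence $x=-\tfrac12 e$, $y=\tfrac12 e$ with $e:=y-x\neq 0$; fix $f_0$ so that $\{f_0,e\}$ is a basis and write $z=\xi f_0+\eta e$. For each $\xi$ the function $\phi_\xi(t):=\|\xi f_0+te\|$ is convex and coercive, and strictly convex when $\xi\neq 0$ (then $\xi f_0$ and $e$ are linearly independent), so it has a unique minimiser $t^*(\xi)$ — the parameter of the point of the line $\{\xi f_0+te:t\in\R\}$ nearest to the origin. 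The substitutions $t\mapsto\lambda t$ and $t\mapsto-t$ give $t^*(\lambda\xi)=\lambda t^*(\xi)$ and $t^*(-\xi)=-t^*(\xi)$, whence $t^*$ is linear: $t^*(\xi)=\alpha\xi$ with $\alpha:=t^*(1)$. Since $z=\xi f_0+\eta e$ lies in $B(x,y)$ exactly when $\phi_\xi(\eta+\tfrac12)=\phi_\xi(\eta-\tfrac12)$, and for a strictly convex $\phi_\xi$ (or, at $\xi=0$, for the piecewise-linear $\phi_0(t)=|t|\,\|e\|$) this equality forces the minimiser $\alpha\xi$ to lie strictly between $\eta-\tfrac12$ and $\eta+\tfrac12$, we conclude $|\eta-\alpha\xi|<\tfrac12$ on $B(x,y)$. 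Equivalently, for the linear functional $g$ given by $g(\xi f_0+\eta e):=\eta-\alpha\xi$ (so $g(x)=-\tfrac12$, $g(y)=\tfrac12$),
\[
B(x,y)\subseteq\{z:\,g(x)<g(z)<g(y)\},
\]
i.e.\ $B(x,y)$ is squeezed into the open slab bounded by two parallel hyperplanes, one through $x$ and one through $y$. Granting this, the reformulated \textup{(SRS2)} is immediate: if $p_0,p_1,p_2\in B(x,y)$ then $g(p_i)>g(x)$ for each $i$, hence $g>g(x)$ on $\Conv(\{p_0,p_1,p_2\})$, so $x$ cannot lie in that convex hull. (For $\dim X>2$ one runs the same computation inside the affine plane spanned by $p_0,p_1,p_2$, whose direction space is a strictly convex two-dimensional normed space.)

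For the converse I argue by contraposition and build an explicit witness. If $X$ is not strictly convex there are distinct unit vectors $u,v$ with the segment $[u,v]$ on the unit sphere, and $u,v$ are automatically linearly independent. Put $x:=0$, $y:=v-u$. Writing any element of $\operatorname{cone}(u,v):=\{au+bv:a,b\ge 0\}$ as a nonnegative multiple of a point of $[u,v]$, a direct computation shows that for $z\in C_+:=v+\operatorname{cone}(u,v)$ both $z-x$ and $z-y$ equal $r\,w$ for the \emph{same} $r>0$ and some $w\in[u,v]$, hence $\|z-x\|=r=\|z-y\|$; so $C_+\subseteq B(x,y)$, and symmetrically (using that $[-v,-u]$ also lies on the sphere) $C_-:=-u-\operatorname{cone}(u,v)\subseteq B(x,y)$. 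Take $p_0:=Nu+(N+1)v\in C_+$, $p_1:=-(N+1)u\in C_-$ and $p_2:=-u-Nv\in C_-$. These are affine independent for every $N>0$, and seen from $x=0$ their directions converge as $N\to\infty$ to $u+v$, $-u$, $-v$, which positively span the plane (indeed $(u+v)+(-u)+(-v)=0$). Hence, for $N$ large, $\Conv(\{p_0,p_1,p_2\})$ contains a large neighbourhood of $0$, in particular $x$ and $y$; since $x\neq y$ yet $\|p_i-x\|=\|p_i-y\|$ for all $i$, \textup{(SRS2)} fails.

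I expect the routine work to be (i) checking carefully that $B(x,y)$ is genuinely cut out by $\phi_\xi(\eta+\tfrac12)=\phi_\xi(\eta-\tfrac12)$ and dealing with the exceptional line $\xi=0$, and (ii) verifying in the second part that the barycentric coordinates of $x$ and of $y$ with respect to $p_0,p_1,p_2$ are positive once $N$ is large — a short linear computation. The one genuinely essential use of strict convexity, and the conceptual crux, is the linearity of the nearest-point map $t^*$: it is precisely what puts $B(x,y)$ into an open slab separating $x$ from $y$, which is the statement that trivialises the first implication and also recovers the two-dimensional Blaschke-type fact quoted in the introduction.
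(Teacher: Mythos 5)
Your proof is correct and takes essentially the same route as the paper: the sufficiency direction rests on the fact that in a strictly convex plane $B(x,y)$ lies in an open slab bounded by parallel lines through $x$ and $y$ (which the paper cites as Lemma \ref{locstrictconv_lem}, while you re-derive it via the strict convexity of $\phi_\xi$ and the linearity of the nearest-point parameter), and the necessity direction exploits a flat segment $[u,v]$ of the unit sphere to place cones inside a bisector and choose three affinely independent points whose convex hull swallows $x$ and $y$, exactly as the paper does using Lemma \ref{nonstrictconv_lem}. The details you defer (the exceptional line $\xi=0$ and the barycentric computation for large $N$) check out, so there is no gap.
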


First, we fix some notations which will be used throughout this paper. 
We will denote the line spanned by $x, y$ and the closed line segment between $x$ and $y$ by $\ell(x,y)$ and $[x,y]$, respectively. 
The open and closed unit balls of $X$ with respect to the norm $\|\cdot\|$ will be denoted by $B = B_{\|\cdot\|}$ and $\overline{B} = \overline{B}_{\|\cdot\|}$, respectively. 
The symbol $\partial\overline{B}$ will refer to the unit sphere. 
We will denote the affine hull of a given set $Z\subset X$ by $\Aff(Z)$. 

Before we prove Theorem \ref{main2d_thm}, we recall some lemmas concerning the behaviour of bisectors $B(x,y)$ in two dimensions. 
These lemmas will be used also in order to achieve our goal in the next section. 
In the next one, point (i) is a special case of \cite[Proposition 22]{MaSw}, and (ii) can be found as \cite[Lemma 2]{GH}.

\begin{lem}\label{locstrictconv_lem}
Let $\|\cdot\|$ be a norm on $\R^2$, and $x, y \in \R^2$ be two distinct points. 
Suppose that there exist exactly two points $p,-p \in \partial\overline{B}$ such that the supporting lines for $\overline{B}$ at these points are parallel to $\ell(x,y)$ (or equivalently, there is no non-degenerate segment on $\partial\overline{B}$ which is parallel to $\ell(x,y)$). 
Then the bisector $B(x,y)$ satisfies the following properties (see Figure 1):
\begin{itemize}
\item[\textup{(i)}] $B(x,y)$ is contained in the open region bounded by the lines $\ell(x,x+p)$ and $\ell(y,y+p)$, such that $p$ is not parallel to $y-x$;
\item[\textup{(ii)}] every line parallel to $\ell(x,y)$ intersects $B(x,y)$ in exactly one point.
\end{itemize} 
\end{lem}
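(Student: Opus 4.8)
The plan is to reduce both assertions to elementary convexity facts about a single function of one real variable. After translating so that $x=0$ and $u:=y-x$ (so $B(x,y)$ becomes $B(0,u)=\{z\in\R^2:\|z\|=\|z-u\|\}$), I first observe that $p\not\parallel u$: a supporting line of $\overline{B}$ at $p$ parallel to $\ell(x,y)$ can only be the line $p+\R u$, and this line cannot contain the interior point $0$, so $p$ is not a multiple of $u$. Hence $\{u,p\}$ is a basis of $\R^2$; let $\chi$ be the linear functional with $\chi(p)=0$ and $\chi(u)=1$. In these coordinates $\ell(x,x+p)$ and $\ell(y,y+p)$ become $\{\chi=0\}$ and $\{\chi=1\}$, so (i) is exactly the claim $0<\chi(z)<1$ for all $z\in B(0,u)$; and a line parallel to $\ell(x,y)$ is exactly a slice $\{\xi u+\eta p:\xi\in\R\}$ for a fixed $\eta\in\R$, so (ii) is the claim that each such slice meets $B(0,u)$ in exactly one point.

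The technical heart is the following: for each fixed $\eta\in\R$ the function $\psi_\eta(s):=\|su+\eta p\|$ is convex and attains its minimum at the single point $s=0$. Convexity is immediate and the case $\eta=0$ is trivial; for $\eta\ne0$, positive homogeneity of the norm reduces the statement to $\eta=\pm1$. But $\{su\pm p:s\in\R\}$ is precisely a supporting line of $\overline{B}$ at $\pm p$ (the unique line through $\pm p$ in the direction $u$), so all of its points have norm $\ge1$; moreover the subset of this line lying on $\partial\overline{B}$ is a segment of $\partial\overline{B}$ parallel to $\ell(x,y)$, which by hypothesis reduces to the single point. Thus $\psi_{\pm1}$ equals $1$ only at $s=0$ and is $\ge1$ elsewhere. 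A routine argument then promotes ``unique global minimizer $0$'' of the convex $\psi_\eta$ to: $\psi_\eta$ is strictly decreasing on $(-\infty,0]$ and strictly increasing on $[0,\infty)$.

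Granting this, (i) is quick: writing $z=cu+\eta p$ with $c=\chi(z)$, we have $\|z\|=\psi_\eta(c)$ and $\|z-u\|=\psi_\eta(c-1)$, so if $c\ge1$ then $0\le c-1<c$ and strict monotonicity gives $\|z-u\|=\psi_\eta(c-1)<\psi_\eta(c)=\|z\|$, whence $z\notin B(0,u)$; thus $\chi(z)<1$ on $B(0,u)$, and since $z\mapsto u-z$ maps $B(0,u)$ onto itself and turns $\chi$ into $1-\chi$, also $\chi(z)>0$. For (ii), existence is the intermediate value theorem applied to $t\mapsto\|q_0+tu-x\|-\|q_0+tu-y\|$ along the line, once one notes (from the asymptotics of the convex function $t\mapsto\psi_\eta(t+\text{const})$) that it tends to $-\|u\|$ and $\|u\|$ at the two ends. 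For uniqueness, suppose the slice at level $\eta$ contained two points $q_1$ and $q_2=q_1+\lambda u$ with $\lambda>0$; writing $q_1=\xi u+\eta p$, membership in $B(0,u)$ gives $\psi_\eta(\xi-1)=\psi_\eta(\xi)$ and $\psi_\eta(\xi+\lambda-1)=\psi_\eta(\xi+\lambda)$. By the strict two-sided monotonicity of $\psi_\eta$, each level set $\{\psi_\eta=v\}$ is a pair $a(v)<0<b(v)$ with $b$ strictly increasing and $a$ strictly decreasing in $v$; the two relations read $b(v)-a(v)=1=b(v')-a(v')$ for $v=\psi_\eta(\xi)$ and $v'=\psi_\eta(\xi+\lambda)$, while $b(v')=\xi+\lambda>\xi=b(v)$ forces $v'>v$ and hence $b(v')-a(v')>b(v)-a(v)$, a contradiction.

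The step I expect to be the crux is the ``unique minimizer'' lemma, and specifically the place where the hypothesis enters: the absence of a non-degenerate segment of $\partial\overline{B}$ parallel to $\ell(x,y)$ is exactly what forces $\psi_{\pm1}$ to be \emph{strictly} minimized at $0$ (without it the conclusion fails, as $\ell^\infty$ on $\R^2$ shows). Everything after that is standard one-variable convexity, the only other mild care being the asymptotic estimate underlying existence in (ii).
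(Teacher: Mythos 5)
Your proof is correct. Note that the paper itself does not prove this lemma: point (i) is quoted from Martini--Swanepoel (Proposition 22) and point (ii) from G.\ Horv\'ath (Lemma 2), so there is no in-paper argument to compare against; your write-up supplies a self-contained proof. The organization --- coordinates in the basis $\{u,p\}$ with $u=y-x$, and the one-variable convex functions $\psi_\eta(s)=\|su+\eta p\|$ --- captures exactly what is needed, and every step checks out: $p\not\parallel u$ because the supporting line $p+\R u$ would otherwise contain the interior point $0$; the hypothesis enters precisely where you say, since the set of unit-norm points on the supporting line $p+\R u$ is $(p+\R u)\cap\overline{B}$, a segment of $\partial\overline{B}$ parallel to $\ell(x,y)$, which must degenerate to $\{p\}$, giving $\psi_{\pm1}$ the unique minimizer $0$, and homogeneity propagates this to all $\eta$; strict one-sided monotonicity of a convex function away from its unique minimizer then yields (i) directly and uniqueness in (ii) via your level-set comparison (equivalently: $\psi_\eta(\xi-1)=\psi_\eta(\xi)$ forces $\xi-1<0<\xi$, and two such solutions $\xi<\xi'$ give $\psi_\eta(\xi)<\psi_\eta(\xi')$ while $\psi_\eta(\xi-1)>\psi_\eta(\xi'-1)$, a contradiction). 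For existence in (ii) you do not even need the asymptotic limits $\pm\|u\|$: the sign change of $\xi\mapsto\psi_\eta(\xi)-\psi_\eta(\xi-1)$ is already visible between $\xi=0$ and $\xi=1$, since $\psi_\eta(0)<\psi_\eta(-1)$ and $\psi_\eta(1)>\psi_\eta(0)$ (this also re-proves that the intersection point lies strictly between the two bounding lines, consistent with (i)). No gaps.
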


\begin{figure}
\begin{tikzpicture}[scale=1.5]
	\draw [thick] plot [smooth cycle] coordinates {(1, 0) (0.840896, 0.840896) (0, 1) (-0.840896, 0.840896) (-1, 0) (-0.840896, -0.840896) (0, -1) (0.840896, -0.840896)};
	\draw [ultra thick] plot [smooth] coordinates {(-2,-0.720033) (-1.5,-0.348066) (-1,0.0477024) (-0.5,0.436767) (0,0.511878) (0.5,0.813354) (1,1.22221) (1.5,1.5994) (2,1.96415)};
	\draw[fill] (0,0) circle [radius=1pt];
	\node [below right, black] at (0,0) {$x$};
	\draw[fill] (-0.340482, 0.996623) circle [radius=1pt];
	\node [below right, black] at (-0.340482, 0.996623) {$y$};
	\draw[fill] (0.94,0.63) circle [radius=1pt];
	\node [below right, black] at (0.94,0.63) {$x+\|x-y\|\cdot p$};
	\draw [dashed] (-2,-1.33681) -- (2,1.33681);
	\draw [dashed] (-2,-0.112604) -- (2,2.56101);
\end{tikzpicture}
\caption{A circle of radius $\|x-y\|$ and centred at $x$, the bisector $B(x,y)$, and the region associated with it.}
\end{figure}
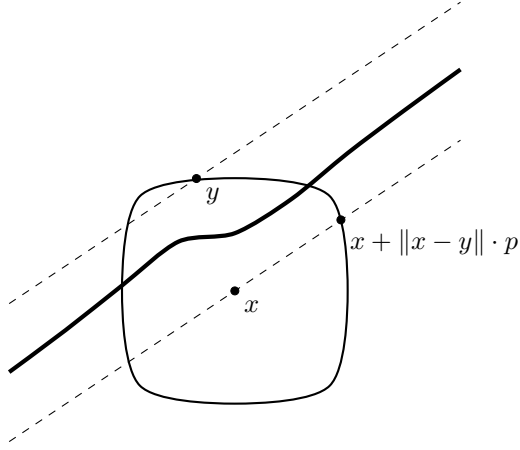

The following lemma is quite trivial, however we were not able to find it in the literature. 

\begin{lem}\label{nonstrictconv_lem}
Let $\|\cdot\|$ be a norm on $\R^2$, and $x, y \in \R^2$ be two distinct points. 
Suppose that there exists a non-degenerate segment on $\partial\overline{B}$ which is parallel to $\ell(x,y)$. 
Let $[a,b]$ be such a segment with the additional property that $a-b = \lambda (x-y)$ with some $\lambda > 0$. 
Then $B(x,y)$ satisfies the following two conditions:
\begin{itemize}
\item[\textup{(i)}] 
\begin{equation}\label{inftriang_eq}
\left\{ x + \tfrac{1}{\lambda}b + sa + tb \colon t,s\geq 0 \right\} \subseteq B(x,y)
\end{equation}
(see Figure 2);
\item[\textup{(ii)}] every line parallel to $\ell(x,y)$ intersects $B(x,y)$ in at least one point.
\end{itemize}
\end{lem}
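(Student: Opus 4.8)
The plan is to prove both parts by direct computation, the only real ingredient being the way the norm behaves along a segment contained in $\partial\overline{B}$.

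For part (i), the crucial observation is that whenever a non-degenerate segment $[a,b]$ lies on the unit sphere, the norm is \emph{additive} on the convex cone it spans: $\|\alpha a+\beta b\|=\alpha+\beta$ for all $\alpha,\beta\geq 0$. Indeed, if $\alpha+\beta>0$ then $\tfrac{1}{\alpha+\beta}(\alpha a+\beta b)$ is a convex combination of $a$ and $b$, hence it lies on $[a,b]\subseteq\partial\overline{B}$ and therefore has norm $1$; the case $\alpha=\beta=0$ is trivial. Granting this, take $z=x+\tfrac1\lambda b+sa+tb$ with $s,t\geq 0$. Since $a-b=\lambda(x-y)$, i.e. $x-y=\tfrac1\lambda(a-b)$, one computes
\[
z-x=sa+\Bigl(t+\tfrac1\lambda\Bigr)b,\qquad z-y=\Bigl(s+\tfrac1\lambda\Bigr)a+tb .
\]
Both right-hand sides are non-negative combinations of $a$ and $b$, so additivity gives $\|z-x\|=s+t+\tfrac1\lambda=\|z-y\|$, that is, $z\in B(x,y)$. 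This is exactly \eqref{inftriang_eq}.

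For part (ii), I would \emph{not} try to derive the statement from (i): the wedge \eqref{inftriang_eq}, even together with the analogous wedge obtained by applying (i) to the centrally symmetric segment $[-b,-a]\subseteq\partial\overline{B}$, only meets those lines parallel to $\ell(x,y)$ that lie outside a certain bounded strip. Instead I would argue by connectedness. Fix a line $L$ parallel to $\ell(x,y)$ and a point $z_0\in L$, and put $g(r):=\|(z_0-x)+r(x-y)\|$. Then $g$ is convex and, since $x\neq y$, $g(r)\to\infty$ as $|r|\to\infty$; consequently $g$ is non-increasing on some ray $(-\infty,-R]$ and non-decreasing on some ray $[R,\infty)$. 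Parametrising $L$ as $z_0+r(x-y)$ and using $z_0+r(x-y)-y=(z_0-x)+(r+1)(x-y)$, the continuous function $r\mapsto\|z-x\|-\|z-y\|=g(r)-g(r+1)$ is $\geq 0$ for all sufficiently negative $r$ and $\leq 0$ for all sufficiently large $r$. By the intermediate value theorem it vanishes for some $r$, so $L\cap B(x,y)\neq\emptyset$.

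Neither step is genuinely hard. The only point requiring a little care is in (ii): the most natural attempt, reading the conclusion off the wedges produced by (i), leaves an uncovered strip of parallel lines, and one has to replace it by the monotonicity of the convex, coercive function $g$ near $\pm\infty$.
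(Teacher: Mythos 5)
Your proof is correct. Part (i) is essentially identical to the paper's argument: the same additivity $\|\alpha a+\beta b\|=\alpha+\beta$ on the cone spanned by $a$ and $b$ (which the paper states without the one-line justification you supply), followed by the same computation of $\|z-x\|$ and $\|z-y\|$. Part (ii) in the paper is also an intermediate value argument along the line, but the two points of opposite sign are produced differently: the paper writes the line as $\ell(x+ta,y+ta)$ and asserts that $h(x+ta)\le 0$ and $h(y+ta)\ge 0$ for $h(z)=\|x-z\|-\|y-z\|$; these sign claims are left unverified there, and checking them requires a small case analysis combining the reverse triangle inequality with the additivity from part (i), plus the (true but unstated) observation that $a$ cannot be parallel to $x-y$, so that every parallel line is of this form. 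Your alternative — using convexity and coercivity of $g(r)=\|(z_0-x)+r(x-y)\|$ to show $g(r)-g(r+1)$ has the required signs near $\mp\infty$ — avoids all of this and uses no hypothesis on the unit sphere at all, so it proves in one stroke that in \emph{any} normed space and for \emph{any} $x\neq y$, every line parallel to $\ell(x,y)$ meets $B(x,y)$; this stronger form is exactly what is invoked again at the end of the proof of Theorem \ref{main>=3d_thm} to extend configurations to higher dimensions. Your remark that the wedges from \eqref{inftriang_eq} (even together with their antipodal counterparts) miss a strip of parallel lines is accurate, but the paper never attempts that shortcut either.
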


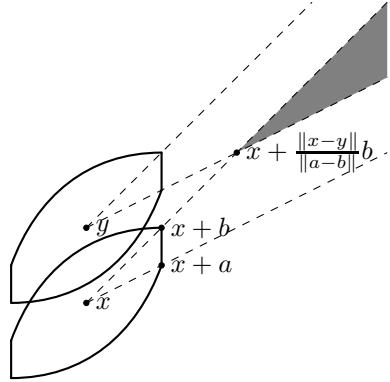
\begin{figure}
\begin{tikzpicture}
	\draw[thick] (0,0.5) to [out=70,in=180] (2,2);
	\draw[thick] (0,0) to [out=0,in=250] (2,1.5);
	\draw[thick] (0,0) -- (0,0.5); 
	\draw[thick] (2,1.5) -- (2,2);
		\draw[fill] (0,0) circle [radius=0.2pt];
		\draw[fill] (0,0.5) circle [radius=0.2pt];
		\draw[fill] (2,1.5) circle [radius=0.2pt];
		\draw[fill] (2,2) circle [radius=0.2pt];
	\draw[thin, dashed] (1,1) -- (3,3); 
	\draw[thin, dashed] (1,1) -- (5,3);
	\draw[fill] (1,1) circle [radius=1pt];
	\node [right, black] at (1,1) {$x$};
	\draw[thick] (0,1.5) to [out=70,in=180] (2,3);
	\draw[thick] (0,1) to [out=0,in=250] (2,2.5);
	\draw[thick] (0,1) -- (0,1.5); 
	\draw[thick] (2,2.5) -- (2,3);
		\draw[fill] (0,1) circle [radius=0.2pt];
		\draw[fill] (0,1.5) circle [radius=0.2pt];
		\draw[fill] (2,2.5) circle [radius=0.2pt];
		\draw[fill] (2,3) circle [radius=0.2pt];
	\draw[thin, dashed] (1,2) -- (4,5); 
	\draw[thin, dashed] (1,2) -- (3,3);
	\draw[fill] (1,2) circle [radius=1pt];
	\node [right, black] at (1,2) {$y$};
	\draw [black, dashed, thin, fill=gray] (5,5) -- (3,3) -- (5,4);
	\draw[fill] (2,1.5) circle [radius=1pt];
	\node [right, black] at (2,1.5) {$x+a$};
	\draw[fill] (2,2) circle [radius=1pt];
	\node [right, black] at (2,2) {$x+b$};
	\draw[fill] (3,3) circle [radius=1pt];
	\node [right, black] at (3,3) {$x+\tfrac{\|x-y\|}{\|a-b\|}b$};
\end{tikzpicture}
\caption{If the unit circle contains a non-degenerate segment, then the grey area is a subset of the bisector $B(x,y)$.}
\end{figure}

\begin{proof}
(i): Since $[a,b] \subset \partial\overline{B}$, we obtain that $\|t_1a + t_2b\| = t_1+t_2$ is fulfilled for every $t_1,t_2\geq 0$.
Therefore for every $s,t\geq 0$ we have
$$
\left\| \left(x + \tfrac{1}{\lambda}b + sa + tb\right) - x \right\| = \left\| sa + \left(\tfrac{1}{\lambda} + t\right)b \right\| = s + t + \tfrac{1}{\lambda}
$$
and 
$$
\left\| \left(x + \tfrac{1}{\lambda}b + sa + tb\right) - y \right\| = \left\| \tfrac{1}{\lambda}(a-b) + sa + \left(\tfrac{1}{\lambda} + t\right)b \right\| = \left\| \left(\tfrac{1}{\lambda} + s\right) a + t b \right\| = s + t + \tfrac{1}{\lambda}
$$
which verfies the statement.

(ii): Let $t\in\R$, and we define the following function:
\[
h\colon \ell(x+ta,y+ta) \to \R, \quad h(z) = \|x-z\|-\|y-z\|.
\]
We have $h(x+ta)\leq 0$ and $h(y+ta)\geq 0$. 
Since $h$ is continuous, we obtain $0\in\ran h$, and thus $B(x,y)\cap\ell(x+ta,y+ta) \neq \emptyset$ holds for every $t\in\R$. 
\end{proof}

Again, we were not able to find the following lemma in related publications, but we suspect that this might be known.

\begin{lem}\label{bisectline_lem}
Let $\|\cdot\|$ be a norm on $\R^2$. 
Suppose that $B(x,y)$ is a line for two distinct points $x, y \in \R^2$. 
Then the affine map $\phi$ for which every point of $B(x,y)$ is a fixpoint, and $\phi(x) = y$, is an isometry. 
Furthermore, if every bisector is a line, then the norm is Euclidean.
\end{lem}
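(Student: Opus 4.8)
The plan is to recognise $\phi$ as a skew reflection and then to read off properties of the norm from the fact that the bisector degenerates to a line.

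\emph{First assertion.} Write $L:=B(x,y)$ and $m:=\tfrac12(x+y)$. Since $\|x-x\|=0\neq\|x-y\|$ we have $x\notin L$ (and $y\notin L$), whereas $\|m-x\|=\tfrac12\|x-y\|=\|m-y\|$ shows $m\in L$; thus $\phi$ --- the affine map fixing $L$ pointwise with $\phi(x)=y$, which is well defined precisely because $x\notin L$ --- also fixes $m$. Translating so that $m=0$, the linear part $A$ of $\phi$ then satisfies $Au_0=u_0$ for a direction vector $u_0$ of $L$, and $A(x-m)=y-m=-(x-m)$, i.e.\ $Av=-v$ with $v:=\tfrac12(y-x)$ (and $v$ is not proportional to $u_0$, since $x\notin L$). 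The crux is the identity, valid for every $w\in L$,
\[
A(w-x)=A\bigl((w-m)-(x-m)\bigr)=(w-m)+(x-m)=w+x-2m=w-y ,
\]
which gives $\|A(w-x)\|=\|w-y\|=\|w-x\|$, the last equality because $w\in B(x,y)$. As $w$ ranges over $L$ the vector $w-x$ ranges over the affine line $\{tu_0+v:t\in\R\}$, and $A(tu_0+v)=tu_0-v$, so $\|tu_0+v\|=\|tu_0-v\|$ for all $t\in\R$. A short positive homogeneity argument upgrades this to $\|\alpha v+\beta u_0\|=\|-\alpha v+\beta u_0\|=\|A(\alpha v+\beta u_0)\|$ for all $\alpha,\beta\in\R$; since $\{v,u_0\}$ is a basis, $A$ is a linear isometry, hence $\phi$ is an isometry.

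\emph{Second assertion.} Here I would feed the first part back into the hypothesis. For every $p\neq 0$ the set $B(p,-p)$ contains $0$ and, by assumption, is a line $L_p$ through the origin, with $p\notin L_p$. By the first part the linear map $R_p$ fixing $L_p$ pointwise with $R_p(p)=-p$ is a linear isometry; it is an involution whose $(-1)$-eigenline is $\R p$ (and whose $(+1)$-eigenline is $L_p$), so $R_p\neq R_{p'}$ whenever $\R p\neq\R p'$. Hence the group $G$ of linear isometries of $(\R^2,\|\cdot\|)$ is infinite. On the other hand $G$ is a closed and bounded --- thus compact --- subgroup of $GL_2(\R)$, and an infinite compact subgroup of $GL_2(\R)$ cannot be discrete, so its identity component is a positive-dimensional connected compact subgroup of $GL_2(\R)$, which can only be a copy of the rotation group $SO(2)$. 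The closed unit ball, being $G$-invariant, is then a centrally symmetric convex body with nonempty interior invariant under a one-parameter group of linear maps conjugate to $SO(2)$; such a body is an ellipse, so the norm is Euclidean.

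I expect the only genuinely delicate point to be this last implication. One can instead argue more by hand: Lemma \ref{nonstrictconv_lem}(i) shows that if every bisector is a line then $\partial\overline{B}$ contains no segment, so the norm is strictly convex, and Lemma \ref{locstrictconv_lem}(i) then identifies $B(p,-p)$ as the line through the origin in the unique direction whose supporting lines are parallel to $p$; combined with the isometries $R_p$ this forces the ``conjugate direction'' map to be a fixed-point-free involution of the projective line, from which one can push through an elementary conclusion. The point to keep in mind is that merely having all bisectors linear makes Birkhoff orthogonality symmetric, which by itself does not imply Euclideanness (Radon curves); it is essential that the reflections $R_p$ themselves are isometries, and the compactness argument above is the most efficient way to exploit that.
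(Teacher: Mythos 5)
Your proof is correct. The first assertion is handled essentially as in the paper: both arguments reduce to the midpoint being the origin, identify the linear part $A$ as the map with $Au_0=u_0$, $Av=-v$, extract the identity $\|tu_0+v\|=\|tu_0-v\|$ from the bisector condition, and upgrade it by homogeneity to all of $\R^2$; your bookkeeping via $A(w-x)=w-y$ is just a slightly different packaging of the paper's observation that $0\in B(\alpha p+\beta z,\alpha p-\beta z)$. For the second assertion the paper simply invokes characterization (2.8) of Amir's book, whereas you give a self-contained argument: the reflections $R_p$ produce an infinite compact group $G$ of linear isometries, which (being a closed subgroup of a Lie group) has a positive-dimensional identity component conjugate to $SO(2)$, forcing the unit ball to be an ellipse. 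This is sound, though it leans on Cartan's closed subgroup theorem; a slightly more elementary route to the same end is to average an inner product over the Haar measure of $G$ to conjugate $G$ into $O(2)$, note that the only infinite closed subgroups of $O(2)$ contain $SO(2)$, and conclude as you do. Your closing caveat is well taken: it is genuinely the isometry property of the $R_p$ (not merely the linearity of bisectors, which alone would only give symmetry of Birkhoff orthogonality and thus admit Radon norms) that drives the conclusion, and your argument correctly exploits exactly that.
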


\begin{proof}
Clearly, with $z = \tfrac{1}{2}(y-x)$ the bisector $B(-z,z)$ is a line $\R\cdot p$ going through the origin. 
We consider the linear transformation $\varphi\colon\R^2\to\R^2$ with $\varphi(z) = -z$ and $\varphi(p) = p$. 
We also choose an arbitrary point $\alpha p + \beta z \in \R^2$. 
Since $0\in B(\alpha p + \beta z, \alpha p - \beta z)$, we get $\|\varphi(\alpha p + \beta z)\| = \|\alpha p - \beta z\| = \|\alpha p + \beta z\|$, and we conclude that $\varphi$ is a linear isometry. Therefore the affine map $\phi$ defined in the statement is indeed an isometry. 

Concerning the second statement, we only have to use the characterization (2.8) in \cite{Am}.
\end{proof}

The first statement of the above lemma has a natural generalization in higher dimensions as well, and it can be handled along the same lines. 
However, we will not need it here. 
The second assertion in higher dimensions follows immediately from Theorem \ref{Blaschke_thm}. 
Now, we are in the position to give our proof of the first main result.

\begin{proof}[Proof of Theorem \ref{main2d_thm}]
Concerning the sufficiency part, let us assume that $X$ is strictly convex. 
Then every two-dimensional subspace is strictly convex as well, with respect to the inherited norm. 
Without loss of generality, we may restrict ourselves into one two-dimensional subspace, and consider two arbitrary distinct points $x,y\in X$. 
Then by Lemma \ref{locstrictconv_lem}, the bisector lies in an open region bounded by the parallel lines $\ell(x,x+p)$ and $\ell(y,y+p)$, and therefore the convex hull of the whole $B(x,y)$ is contained in that open region. 
Since $x$ and $y$ lies on the boundary of this region, they do not lie in the convex hull of any three points from $B(x,y)$. 
Therefore $X$ fulfilles (SRS2). 

Concerning the necessity part, let us assume that a non-degenerate segment $[a,b]$ lies on the boundary of $\overline{B}$. 
By Lemma \ref{nonstrictconv_lem}, we have $-\tfrac{1}{2}(a+b), (1+s)a+b, a+(1+s)b \in B(a,b)$ ($s> 0$) which are affine independent. 
It is easy to see that for large enough numbers $s$, the convex hull of these three points contains $a$ and $b$. 
This completes the proof.
\end{proof}

A statement about norms on $\R^d$ can be naturally transformed into another statement which considers centrally symmetric, convex, compact bodies with non-empty interior. 
This is done in the next corollary.

\begin{cor}\label{2_cor}
Let $m \geq 2$, and $K$ be a convex, compact body in $\R^m$ with non-empty interior such that $K = -K$. 
The following two conditions are equivalent:
\begin{itemize}
\item[\textup{(i)}] for every two numbers $\lambda_1,\lambda_2\in(0,\infty)$, and linearly independent vectors $v_1, v_2 \in \R^m$, the intersection 
\[
\Conv(\{0, v_1, v_2\}) \cap (\partial K) \cap (v_1 + \lambda_1 \cdot \partial K) \cap (v_2 + \lambda_2 \cdot \partial K)
\] 
has at most one element;
\item[\textup{(ii)}] $K$ is strictly convex. 
\end{itemize}
\end{cor}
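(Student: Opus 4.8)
The plan is to recognise that Corollary \ref{2_cor} is nothing but Theorem \ref{main2d_thm} transcribed into the language of convex bodies, so the whole argument will amount to a careful dictionary translation plus one normalisation. First I would recall the standard correspondence between norms on $\R^m$ and centrally symmetric convex compact bodies with non-empty interior: given such a $K$, its Minkowski functional $\|\cdot\|_K$ is a norm with closed unit ball $K$, and conversely $\overline{B}_{\|\cdot\|}$ is such a body for every norm $\|\cdot\|$. Under this dictionary one has $\|v\|_K = \lambda$ if and only if $v \in \lambda\,\partial K$ (for $v\in\R^m$, $\lambda>0$), and $\|\cdot\|_K$ is strictly convex precisely when $\partial K$ contains no non-degenerate segment, i.e.\ exactly in case (ii).

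Next I would unpack what it means for $(\R^m,\|\cdot\|_K)$ to fail property \textup{(SRS2)}. Using the reformulation already recorded in the introduction, \textup{(SRS2)} fails exactly when there are three affine independent points $p_0,p_1,p_2$ and two distinct points $z_1,z_2\in\Conv(\{p_0,p_1,p_2\})$ with $\|z_1-p_j\|=\|z_2-p_j\|$ for $j=0,1,2$. Writing $r_j$ for this common value, note that $r_j>0$ automatically (if $r_j=0$ then $z_1=p_j=z_2$), so the condition says precisely that the set $\Conv(\{p_0,p_1,p_2\})\cap\bigcap_{j=0}^{2}(p_j+r_j\,\partial K)$ has at least two elements.

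Then comes the only non-formal point, the normalisation. Translating by $-p_0$ and then dilating by $1/r_0$ is an affine bijection of $\R^m$ fixing $0$; it sends $p_j$ to $v_j:=(p_j-p_0)/r_0$ (so $v_0=0$), sends $p_j+r_j\,\partial K$ to $v_j+\lambda_j\,\partial K$ with $\lambda_j:=r_j/r_0$ (so $\lambda_0=1$), and sends $\Conv(\{p_0,p_1,p_2\})$ to $\Conv(\{0,v_1,v_2\})$, preserving the cardinality of each set; moreover $p_0,p_1,p_2$ are affine independent if and only if $v_1,v_2$ are linearly independent. Hence \textup{(SRS2)} fails for $(\R^m,\|\cdot\|_K)$ if and only if there are linearly independent $v_1,v_2\in\R^m$ and $\lambda_1,\lambda_2\in(0,\infty)$ such that $\Conv(\{0,v_1,v_2\})\cap(\partial K)\cap(v_1+\lambda_1\,\partial K)\cap(v_2+\lambda_2\,\partial K)$ has at least two elements, that is, exactly when (i) fails.

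Finally I would chain the equivalences: (i) fails $\iff$ \textup{(SRS2)} fails for $(\R^m,\|\cdot\|_K)$ $\iff$ (by Theorem \ref{main2d_thm}) $(\R^m,\|\cdot\|_K)$ is not strictly convex $\iff$ (ii) fails, which is the asserted equivalence (i)$\Leftrightarrow$(ii). I do not expect a genuine obstacle; the only place that demands attention is verifying that the normalisation costs no generality — in particular that the radius attached to the vertex sent to $0$ may always be taken equal to $1$ — so that every configuration appearing in (i) really corresponds to an \textup{(SRS2)}-violating configuration and conversely.
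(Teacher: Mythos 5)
Your proof is correct and is exactly the translation the paper intends: the paper states Corollary \ref{2_cor} as the "natural transformation" of Theorem \ref{main2d_thm} into the language of centrally symmetric convex bodies and omits the details, which your dictionary (Minkowski functional, reformulation of a failure of (SRS2) as a two-point intersection of spheres, and the translate-and-dilate normalisation making $\lambda_0=1$) supplies faithfully.
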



\section{A characterization of inner product spaces}

This section is devoted to the verification of the following result.

\begin{theorem}\label{main>=3d_thm}
Let $d\geq 3$. 
A real normed space $X$ with $\dim X \geq d$ satisfies property \eqref{SRSd} if and only if it is an inner product space.
\end{theorem}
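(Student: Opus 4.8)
My plan is to dispatch sufficiency by a direct bisector argument and to reduce necessity to Theorem~\ref{Blaschke_thm}. For sufficiency, let $X$ be an inner product space and $p_0,\dots,p_d\in X$ affinely independent; if $s_1,s_2\in\Conv(\{p_0,\dots,p_d\})$ satisfy $\|p_j-s_1\|=\|p_j-s_2\|$ for $j=0,\dots,d$, then $p_0,\dots,p_d\in B(s_1,s_2)$, which is an affine hyperplane whenever $s_1\neq s_2$, so this hyperplane contains $\Aff(\{p_0,\dots,p_d\})$, and in particular $s_1$; but $s_1\in B(s_1,s_2)$ forces $s_1=s_2$, so \eqref{SRSd} holds. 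For necessity, suppose \eqref{SRSd} holds with $d\geq3$. Since \eqref{SRSd} descends to $d$-dimensional subspaces, and since (inner productness being a two-dimensional property, by the Jordan--von Neumann theorem) it is enough to prove that every $d$-dimensional subspace is Euclidean, I would restrict to the case $\dim X=d$; by Theorem~\ref{Blaschke_thm} it then suffices to show that every bisector of $X$ lies between two parallel hyperplanes.

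So I would assume, towards a contradiction, that $B:=B(x_0,y_0)$ is contained in no slab between parallel hyperplanes, and write $m_0=\tfrac12(x_0+y_0)$. The set $B$ is centrally symmetric about $m_0\in B$, the symmetry interchanging $x_0$ and $y_0$; hence for every nonzero linear functional $f$ one has $\inf_B f=2f(m_0)-\sup_B f$, so $f$ is bounded below on $B$ exactly when it is bounded above. As a closed set lies between two parallel hyperplanes precisely when some nonzero functional is bounded on it, it follows that no nonzero functional is bounded on either side on $B$; consequently $\overline{\Conv}(B)=X$, so $\operatorname{int}\Conv(B)=X$ and in particular $\Aff(B)=X$. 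I would also use a general fact: since $t\mapsto\|v+t(y_0-x_0)\|$ is convex with asymptotic slopes $\pm\|y_0-x_0\|$, the quantity $\|z-x_0\|-\|z-y_0\|$ changes sign along every line parallel to $\ell(x_0,y_0)$, so every such line meets $B$.

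It remains to exhibit $d+1$ affinely independent points of $B$ whose convex hull contains $x_0$ and $y_0$, which contradicts the reformulation of \eqref{SRSd}. I would first handle the case that $X$ is not strictly convex. If a non-degenerate segment $[a,b]$ lies on $\partial\overline{B}$, then the cones $\{(1+s)a+(1+t)b:s,t\geq0\}$ and $\{-sa-tb:s,t\geq0\}$ lie on $B(a,b)$ (compute as in Lemma~\ref{nonstrictconv_lem}); thus, for $s$ large, the three affinely independent points $-\tfrac12(a+b)$, $(1+s)a+b$, $a+(1+s)b$ of $B(a,b)$ already have $a$ and $b$ in their convex hull, exactly as in the proof of Theorem~\ref{main2d_thm}. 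Moreover any hyperplane containing $\Aff(B(a,b))$ contains $\operatorname{span}\{a,b\}\ni a-b$, hence is a union of lines parallel to $\ell(a,b)$, which is impossible because every such line meets $B(a,b)$; so $\Aff(B(a,b))=X$, and one enlarges the above triple to $d+1$ affinely independent points of $B(a,b)$. Hence we may assume $X$ is strictly convex, so that $B$ is a connected topological hyperplane separating $X$, meeting every line parallel to $\ell(x_0,y_0)$ exactly once and, in each plane through $\ell(x_0,y_0)$, lying between the two supporting lines of the unit ball parallel to that line (Lemma~\ref{locstrictconv_lem}; \cite[Theorem~23]{MaSw}; \cite{GH}). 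In this case I would build the simplex from points of $B$ that, by the unboundedness recorded above, escape to infinity in directions transverse to $\ell(x_0,y_0)$, together with ``collar'' points of $B$ near $m_0$ and a point of $B$ with large, and one with small, value of a functional equal to $1$ on $y_0-x_0$, choosing them far enough out that the resulting $d$-simplex is long and thin along $\ell(x_0,y_0)$, contains $m_0$ in its interior, and hence contains the whole segment $[x_0,y_0]$. An alternative would be a degree/intermediate-value deformation of an inscribed simplex whose interior contains $x_0$, moving its vertices along the connected set $B$ until the interior also contains $y_0$.

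I expect this last step to be the only genuine difficulty. Note that ``$x_0,y_0\in\operatorname{int}\Conv(B)$'', and even ``$\Conv(B)=X$'', do not suffice by themselves: for the four vertices of a square in the plane there are pairs of interior points of the convex hull contained in no common inscribed triangle. So the construction must really exploit that $B$ is a bisector --- its connectedness, its central symmetry, its meeting every line parallel to $\ell(x_0,y_0)$, and the planar behaviour of Lemmas~\ref{locstrictconv_lem}--\ref{nonstrictconv_lem} --- to force a single inscribed $d$-simplex containing both centres. Given such a simplex, its $d+1$ vertices contradict \eqref{SRSd}, completing the proof; everything else is routine bookkeeping together with Theorem~\ref{Blaschke_thm}.
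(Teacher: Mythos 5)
Your overall architecture is the same as the paper's: sufficiency via the fact that bisectors in an inner product space are hyperplanes, necessity by disposing of the non--strictly-convex case through Theorem~\ref{main2d_thm} and Lemma~\ref{nonstrictconv_lem}, and then reducing the strictly convex case to Theorem~\ref{Blaschke_thm}. All of that is correct, as is your observation that every line parallel to $\ell(x_0,y_0)$ meets the bisector (which is how one pads a low-dimensional configuration out to $d+1$ affinely independent points). The problem is that the one step you defer --- given a bisector $B(x_0,y_0)$ of a strictly convex norm that lies in no slab, produce $d+1$ affinely independent points of $B(x_0,y_0)$ whose convex hull contains \emph{both} $x_0$ and $y_0$ --- is precisely the mathematical content of the theorem, and neither of your two sketched strategies establishes it. Your first strategy (a ``long and thin'' simplex built from far-away points of $B$ plus collar points near $m_0$, containing $m_0$ in its interior) uses only that $B$ is transversally unbounded and contains $m_0$; but a Euclidean bisector has both properties and admits no inscribed simplex containing $x_0$ or $y_0$ at all. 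So the failure of the slab condition must enter quantitatively, and your sketch never says how: knowing that a functional $f$ with $f(y_0-x_0)=1$ is unbounded on $B$ tells you $B$ reaches far in the $f$-direction, but not that the convex hull of finitely many such points ever crosses the segment $[x_0,y_0]$ rather than skirting around it. The degree-theoretic alternative is not developed at all. You yourself flag this as ``the only genuine difficulty''; it is, and it is unresolved.

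For comparison, the paper closes exactly this gap by first reducing to $d=3$ (four points in a non-Euclidean three-dimensional subspace, then $d-3$ extra points on lines parallel to $\ell(x,y)$), and then making the slab failure usable through Lemma~\ref{convfunct_lem}: the planar sections of $B(0,z)$ through $\ell(0,z)$ lie in strips whose ``heights'' define a homogeneous function $f$ on a transversal plane, and non-linearity of $f$ yields, after a choice of axes, two directions where the bisector tilts strictly above and strictly below an affine interpolant ($\alpha_1>0$, $\alpha_2>0$ in the notation there). The explicit points $q_1^t,\dots,q_4^t$ are then chosen so that the origin is interior to their convex hull, and the uniform bound $0<\vartheta_j^t<1$ on the lift from $q_j^t$ to the actual bisector points $p_j^t$ shows that for large $t$ the convex hull of $p_1^t,\dots,p_4^t$ captures both $(0,0,0)$ and $(0,0,1)$. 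Some argument of this kind --- extracting from ``no slab contains $B$'' a configuration of sections tilting in opposite senses, and controlling the vertical error --- is what your proposal still needs before the contradiction with \eqref{SRSd} is actually obtained.
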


Before we present the proof of the above theorem, we will give our new proof of Theorem \ref{Blaschke_thm}, using the fundamental theorem of projective geometry. 

\begin{proof}[Proof of Theorem \ref{Blaschke_thm}]
One direction is trivial. 
Concerning the other one, we suppose that every bisector lies between two parallel hyperplanes. 
Throughout the proof we will use the natural inner product $\langle\cdot,\cdot\rangle$ and the (possibly non-Euclidean) norm $\|\cdot\|$ on $\R^d$. 
Let $x\neq y, x,y\in \R^d$. 
Lemmas \ref{locstrictconv_lem} and \ref{nonstrictconv_lem} imply that the hyperplanes, in between which $B(x,y)$ lies, do not contain any line which is parallel to $\ell(x,y)$.
In order to see this we assume the contrary, consider $\Aff(\{x,y,x+v\})$ where $v$ is a normal vector of the previously mentioned hyperplanes, and conclude the contradiction that $B(x,y) \cap \Aff(\{x,y,x+v\})$ is a bounded set. 
Therefore if we consider a three-dimensional subspace $Y \subseteq \R^d$ and two distinct points $y_1,y_2 \in Y$, we obtain that $B(y_1,y_2)\cap Y$ must lie between two parallel planes of $Y$.

We would like to conclude that the norm is Euclidean, and it is enough to show this for every three-dimensional subspace.
By the observations made above, we only have to deal with the $d = 3$ case of our theorem. 
Of course we only have to investigate bisectors of the form $B(0,x)$ where $\|x\| = 1$. 
We may also assume by Lemmas \ref{locstrictconv_lem} and \ref{nonstrictconv_lem} that the norm is strictly convex. 

Let $P_x$ be the set of those points $p \in \partial \overline{B}$ for which the supporting plane contains a line parallel to the vector $x$ 
(or in other words, for which the normal vector of the supporting plane is orthogonal to $x$). 
By Lemma \ref{locstrictconv_lem}, the set $P_x$ is the intersection of $\partial \overline{B}$ and a two-dimensional subspace which does not contain $x$.

Let $P(\R^3)$ be the Grassmann space of all one-dimensional subspaces 
(or alternatively, the projective space obtained by identifying antipodal points of the unit sphere $\mathbb{S}^2$). 
If $0\neq v\in\R^3$, then the subspace generated by $v$ will be denoted by $[v]$. 
Similarly, a subspace generated by some vectors $v_1,\dots v_m$ will be denoted by $[v_1,\dots v_m]$. 
We define the following transformation: 
\[
G := G_{\|\cdot\|} \colon P(\R^3) \to P(\R^3), \quad [v] \mapsto G([v]) \quad (v\neq 0),
\] 
where a normal vector of the supporting plane of $\overline{B}$ at the points $G([v])\cap(\partial\overline{B})$ is $v$. 
It is well-known that if $G$ is the identity transformation, then $B$ is a usual ball in $\R^3$ (the radius can be any positive number). 
Since $\overline{B}$ is centrally symmetric and strictly convex, the map $G$ is well-defined.
Obviously it is also surjective, but at this point nothing ensures its injectivity. 

Our observations about the set $P_x$ imply the following: 
if $v_1, v_2$ are linearly independent and $\{0\}\neq [v] \subseteq [v_1, v_2]$, then $G([v]) \in [G([v_1]), G([v_2])]$ is satisfied. 
This means that $G$ preserves projective lines. 
By Faure's version of the fundamental theorem of projective geometry (\cite[Theorem 3.1]{Fa}), and the well-known fact that the only endomorphism of the field $\R$ is the identity map, we obtain the existence of an injective linear map $A\colon \R^3\to\R^3$ such that $G$ is of the following form:
\begin{equation}\label{GA_eq}
G([v]) = [Av] \quad (v\neq 0)
\end{equation}
where $A$ is unique up to a non-zero scalar multiple.
It follows that $G$ is injective.  

Next, let $[v]^\perp$ be the two-dimensional subspace which is orthogonal to $v$ ($v\neq 0$). 
Let $A^T$ denote the transpose of $A$ with respect to the standard base in $\R^3$. 
For any injective linear transformation $C \colon \R^3\to\R^3$ we have 
\begin{equation}\label{C_eq}
C([v]^\perp) = \left[(C^T)^{-1} v\right]^\perp \qquad (v\neq 0)
\end{equation}
since $\langle v, w \rangle = \left\langle (C^T)^{-1} v, C w \right\rangle \, (v,w\in\R^3)$. 
Let us consider the linear image $C(\overline{B})$ of $\overline{B}$, which is the closed unit ball of the norm $|||\cdot||| = \|C^{-1}(\cdot)\|$, and the map $\widetilde{G} := G_{|||\cdot|||}$. 
If the supporting plane for $\overline{B}$ at $p\in\partial B$ is $p + [v]^\perp$ ($v\neq 0$), then the supporting plane for $C(\overline{B})$ at $Cp\in\partial C(B)$ is $Cp + C([v]^\perp)$. 
By \eqref{C_eq}, one normal vector of $Cp + C([v]^\perp)$ is $(C^T)^{-1} v$. 
Therefore we have
\[
\widetilde{G}([(C^T)^{-1} v]) = C (G([v])) = C [Av] = [CAv] \qquad (v\neq 0),
\]
which implies 
\begin{equation}\label{CACT_eq}
\widetilde{G}([v]) = [CAC^T v] \qquad (v\neq 0).
\end{equation}
If $A$ was negative or positive definite, then choosing $C = |A|^{1/2}$ would imply that $\widetilde{G}$ is the identity map, and therefore $B$ is an ellipsoid. 
Ou aim is to show that this is true. 

Let us assume that $M$ is a two-dimensional invariant subspace of $A$ (which exists by Jordan's decomposition). 
We observe that one normal vector of the supporting line for $\overline{B} \cap M$ at the points of $[Av] \cap \partial{\overline{B}}$ is $v$. 
It is quite easy to show, from the strict convexity of $\overline{B}$ and the bijectivity of $A$, that at each point of $\partial\overline{B} \cap M$ there is a unique supporting line. 
Therefore $\partial\overline{B} \cap M$ is smooth. 
Let $\{e_1,e_2\}$ be an orthonormal base of $M$. 
We set $v(t) = \cos t e_1 + \sin t e_2$ $(t\in\R)$, and consider the following parametrization of $\partial{\overline{B}} \cap M$: 
\[
\gamma\colon \R \to M, \quad \gamma(t) = c(t) \cdot Av(t),
\]
where $c\colon \R\to (0,\infty)$ is continuous and $\pi$-periodic. 
From the smoothness of $\partial\overline{B} \cap M$ and the definition of $\gamma$, the differentiability of $\gamma$ follows. 
If we write $\gamma$ with respect to the coordinates $\{Ae_1, Ae_2\}$, and use the fact that at every $t$ either $\tfrac{1}{\sin t}$ or $\tfrac{1}{\cos t}$ is differentiable, then we obtain the differentiability of $c$. 
We define the linear transformation $R\colon M \to M$ with $R e_1 = e_2$ and $R e_2 = -e_1$.
The following calculation is valid:
\[
0 = \langle \gamma'(t), v(t) \rangle = c'(t) \cdot \langle A v(t), v(t) \rangle + c(t) \cdot \langle A R v(t), v(t) \rangle \quad (t \in \R).
\]
Clearly, there exists at least two numbers $t_1, t_2 \in [0,\pi)$, $t_1 \neq t_2$ such that $c'(t_1) = c'(t_2) = 0$. 
From the above equation, we infer that $R v(t_1)$ and $R v(t_2)$ are two linearly independent eigenvectors of $A$. 
Since $M$ was an arbitrary two-dimensional invariant subspace, a straightforward application of Jordan's decomposition theorem gives that $A$ is diagonalizable. 

Now, when we chose the orthonormal base $\{e_1,e_2\}$ of $M$ in the above paragraph, we could have chosen $e_1$ to be an eigenvector associated with some eigenvalue $\lambda_1 (\neq 0)$. If $\lambda_2 (\neq 0)$ is the other eigenvalue of $A|M$ (which is possibly equal to $\lambda_1$), then a straightforward calculation gives us $A e_2 = \lambda_2 e_2 + b e_1$ with some $b \in \R$. 
We calculate the following:
\[
\langle A(\alpha e_1 + e_2), \alpha e_1 + e_2 \rangle = \lambda_1 \alpha^2 + b \alpha + \lambda_2 \qquad (\alpha \in \R).
\] 
If we have $\lambda_1\lambda_2 < 0$, then this can be zero for some $\alpha\in\R$. 
This is a contradiction by the very definition of $G$. 
Since every pair of eigenvalues of $A$ is either positive or negative, we may assume that all of them are positive (if they are negative, we simply consider $-A$ instead of $A$). 

Let $e_3\in\R^3$ be a unit vector which is orthogonal to $M$. 
We can choose an injective linear transformation $C\colon \R^3\to\R^3$ such that the vectors $e_j$ are all eigenvectors, and the matrix of $CAC^T|M$ represented in $\{e_1,e_2\}$ is of the form $\left(\begin{matrix}
1 & a \\
0 & 1
\end{matrix}\right)$.
At this point we note that $a=0$ is valid if and only if $b=0$ holds which is equivalent to the positive definiteness of $A|M$. 
We consider the following parametrization of $C(\partial{\overline{B}}) \cap M$: 
\[
\delta\colon \R \to C(\partial{\overline{B}}) \cap M, \quad \delta(t) = d(t) \cdot C A C^T v(t), 
\]
with some $\pi$-periodic function $d\colon \R\to (0,\infty)$, which is also differentiable. 
By \eqref{CACT_eq}, the following equation is satisfied for every $t\in\R$:
\[
0 = \langle \delta'(t), v(t) \rangle = d'(t) \cdot \langle C A C^T v(t), v(t) \rangle + d(t) \cdot \langle C A C^T R v(t), v(t) \rangle 
\]
\[
 = d'(t)\cdot\left[1+\tfrac{a}{2} \sin (2t)\right] + d(t) a \cos^2 t. 
\]
Obviously, $1+\tfrac{a}{2} \sin (2t) = 0$ cannot happen. 
But then we obtain
\[
(\log d(t))' = \frac{d'(t)}{d(t)} = - \frac{a \cos^2 t}{1+\tfrac{a}{2} \sin (2t)},
\]
which is either positive or negative everywhere, if we have $a\neq 0$. 
Hence in this case the function $d$ cannot be $\pi$ periodic, which is a contradiction. 
Therefore $a=0$, which implies that $A|M$ is positive or negative definite. 
Finally, since this is true for every two-dimensional invariant subspace of $A$, an easy application of Jordan's theorem verifies that $A$ is either negative or positive definite. 
\end{proof}

We note that our method could be used in order to obtain Blaschke's original characterization of ellipsoids. 
We state one immediate consequence of Theorem \ref{Blaschke_thm}.

\begin{cor}
The norm $\|\cdot\|$ on $\R^d$ $(d\geq 2)$ is Euclidean if and only if every bisector is a hyperplane.
\end{cor}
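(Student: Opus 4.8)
The plan is to deduce both implications quickly from results already in place, treating the cases $d=2$ and $d\geq 3$ separately, since Theorem~\ref{Blaschke_thm} is only stated for $d\geq 3$.

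First I would dispose of the easy direction. If $\|\cdot\|$ comes from an inner product $\langle\cdot,\cdot\rangle$ on $\R^d$, then for distinct $x,y$ the equation $\|z-x\|=\|z-y\|$ is equivalent, after squaring, to $\langle z,x-y\rangle=\tfrac12(\|x\|^2-\|y\|^2)$, whose solution set is an affine hyperplane with normal vector $x-y$. This is precisely the well-known fact already recalled in the introduction, so a single line suffices here.

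For the converse, suppose every bisector $B(x,y)$ is a hyperplane. When $d=2$ this says that every bisector is a line, and the second assertion of Lemma~\ref{bisectline_lem} immediately yields that the norm is Euclidean. When $d\geq 3$, a hyperplane trivially lies between two parallel hyperplanes (for instance between itself and a parallel translate, i.e.\ as the boundary of a degenerate closed slab), so the hypothesis of Theorem~\ref{Blaschke_thm} is met and the norm is Euclidean.

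There is essentially no obstacle: the corollary is just a repackaging of Theorem~\ref{Blaschke_thm} together with Lemma~\ref{bisectline_lem}. The only points demanding a moment's attention are the trivial observation that "being a hyperplane" is a degenerate special case of "lying between two parallel hyperplanes", and the reminder to route the $d=2$ case through Lemma~\ref{bisectline_lem} rather than through Theorem~\ref{Blaschke_thm}.
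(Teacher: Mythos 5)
Your proposal is correct and follows exactly the paper's own route: the $d\geq 3$ case via Theorem~\ref{Blaschke_thm} (a hyperplane trivially lies between two parallel hyperplanes) and the $d=2$ case via the second assertion of Lemma~\ref{bisectline_lem}. The paper's proof is just a terser statement of the same two reductions.
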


\begin{proof}
On one hand, an easy application of Theorem \ref{Blaschke_thm} verifies our statement if $d\geq 3$. 
On the other hand, the two-dimensional case was handled in Lemma \ref{bisectline_lem}.
\end{proof}

In order to achieve our goal, we will need the following technical lemma.

\begin{lem}\label{convfunct_lem}
Let $f\colon \R^2\to\R$ be a homogeneous function, i.e. we have 
\begin{equation}\label{hom_eq}
f(\lambda x_1, \lambda x_2) = \lambda\cdot f(x_1,x_2) \qquad (\forall\, x_1,x_2,\lambda\in\R).
\end{equation}
Suppose that if we restrict $f$ into any line segment, then this restriction is either convex or concave (this might be different on different segments). 
Then $f$ is necessarily linear.
\end{lem}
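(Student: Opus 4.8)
\textbf{Proof plan for Lemma \ref{convfunct_lem}.}

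The plan is to exploit homogeneity to reduce everything to the behaviour of $f$ on the unit circle, and then to show that the convex/concave-on-every-segment hypothesis forces the graph of $f$ to be a plane. First I would fix notation: by \eqref{hom_eq} with $\lambda=0$ we get $f(0,0)=0$, and with $\lambda=-1$ we get $f(-x)=-f(x)$, so $f$ is odd. It therefore suffices to determine $f$ on an open half-plane, say $\{x_2>0\}$, together with the line $\{x_2=0\}$; and on that line $t\mapsto f(t,0)$ is odd and, restricted to any segment, convex or concave, hence (being odd) linear, so $f(t,0)=ct$ for a constant $c$.

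Next I would restrict $f$ to horizontal lines. Define $g(t)=f(t,1)$ for $t\in\R$; by homogeneity $f(x_1,x_2)=x_2\,g(x_1/x_2)$ for $x_2>0$. The graph of $f$ over the open half-plane $\{x_2>0\}$ is thus the union of the rays through the origin hitting the curve $t\mapsto(t,1,g(t))$. The key observation is that such a ''cone over a curve'' is convex (as a function, or as an epigraph) on a given planar segment essentially only when the curve itself is affine there; more precisely, I would compute that the restriction of $f$ to a segment has a definite sign of convexity determined by $g$. Concretely, take the segment joining $(a,1)$ and $(b,1)$: then $f$ restricted to it is just $g$ on $[a,b]$, so $g$ is convex or concave on every interval. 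An odd-type argument is not directly available for $g$, so instead I would use a \emph{second} family of segments. Consider the segment from $(a,1)$ to $(-a,1)$ reparametrised, or better, segments not parallel to the $x_1$-axis: along the segment from $(x_1^{(0)},x_2^{(0)})$ to $(x_1^{(1)},x_2^{(1)})$ with $x_2^{(0)},x_2^{(1)}>0$, writing the parameter $s\in[0,1]$ and $x_2(s)=x_2^{(0)}+s(x_2^{(1)}-x_2^{(0)})$, the function $s\mapsto f(\text{point}(s))=x_2(s)\,g\!\big(x_1(s)/x_2(s)\big)$ must be convex or concave. Carrying out this computation (it is a routine but crucial chain-rule calculation) shows that if $g$ is $C^2$ then the sign of the second derivative along such a segment is, up to a positive factor, $g''$ evaluated at the relevant point — so $g$ is convex on every interval \emph{and} concave on every interval once we vary the slope of the segment appropriately, forcing $g''\equiv 0$, i.e. $g$ affine. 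Then $f(x_1,x_2)=x_2(\alpha x_1/x_2+\beta)=\alpha x_1+\beta x_2$ on $\{x_2>0\}$, hence on all of $\R^2$ by oddness, and matching with $f(t,0)=ct$ gives $\alpha=c$; linearity follows.

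The one real gap in the above is that $g=f(\cdot,1)$ is a priori only ''convex-or-concave on each segment'', not smooth. So before differentiating I would first establish regularity. A function on $\R$ whose restriction to every interval is convex or concave is continuous on the interior of its domain (convex/concave functions are continuous on open intervals, and one can patch): indeed I would argue that the set of points having a neighbourhood on which $g$ is convex is open, likewise for concave, these two open sets are dense, and on each $g$ is locally Lipschitz; a connectedness argument then shows $g$ is actually globally convex or globally concave on $\R$, or piecewise so with matching behaviour — at which point $g$ is continuous, and in fact one can upgrade to: the only functions that are simultaneously limits of convex pieces and concave pieces in this sense and also produce a function $f$ with the stated two-parameter segment property are the affine ones. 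Concretely: having reduced to $g$ convex on one interval $I$ and concave on a neighbouring interval $J$, the matching point already forces local affineness, and the globally-varying slope argument from the second paragraph pins it down. The main obstacle, then, is this regularity bootstrap — converting the purely qualitative ''convex or concave on each segment'' hypothesis into enough smoothness (or directly into an affineness conclusion) without circularity; once $g$ is known affine the rest is immediate from homogeneity and oddness.
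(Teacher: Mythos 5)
There is a genuine gap at the central step, and it is not the regularity issue you flag at the end. Write a segment lying in the open upper half-plane as $s\mapsto (x_1(s),x_2(s))$ with $x_1,x_2$ affine and $x_2>0$, put $u=x_1/x_2$ and $h(s)=x_2(s)\,g(u(s))$. The chain-rule computation you defer actually gives
\[
h''(s)=x_2(s)\,u'(s)^2\,g''(u(s)),
\]
because the terms involving $g'$ cancel (this is the classical fact that the perspective $x_2\,g(x_1/x_2)$ of a convex $g$ is convex on $\{x_2>0\}$). The factor $x_2(s)\,u'(s)^2$ is nonnegative for \emph{every} choice of slope, so varying the slope never reverses the required sign of $g''$: all segments contained in the open upper half-plane are simultaneously compatible with $g$ being, say, globally convex, and the conclusion $g''\equiv 0$ does not follow. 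Concretely, the homogeneous, non-linear $f$ obtained from $g(t)=\sqrt{1+t^2}$ (i.e.\ $f=\sqrt{x_1^2+x_2^2}$ on $\{x_2>0\}$, extended oddly) satisfies every constraint your computation can extract, since its restriction to any segment with $x_2^{(0)},x_2^{(1)}>0$ (hence with $x_2(s)>0$ throughout) is convex; so no argument that inspects only such segments can succeed.

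The decisive information has to come from segments that cross a line through the origin, paired with oddness --- which is exactly what the paper's proof does, entirely without derivatives. If $f$ is convex on an open half-plane $U^+$ bounded by a line through the origin, oddness makes it concave on $-U^+$; a line meeting both half-planes is convex on one open half and concave on the other, so the hypothesis (convex or concave on the whole line) forces $f$ to be affine on one of the two halves. Subtracting the linear function that matches it there, homogeneity makes $f$ vanish on a quarter-plane $Q$ and on $-Q$; a further line having unbounded intersection with both $Q$ and $-Q$ must then carry $f\equiv 0$, since a convex (or concave) function on a line vanishing on two opposite unbounded rays vanishes identically, and such lines fill out the plane. Your reduction to $g=f(\cdot,1)$ and the perspective formula are fine as bookkeeping, and your observation that $f$ is odd with $f(0)=0$ is the right starting point, but the comparison that actually pins $f$ down must pair a segment in $\{x_2>0\}$ with its odd reflection, which your proposal never does; once the argument is run with convexity alone, the $C^2$ bootstrap you worry about becomes unnecessary.
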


\begin{proof}
It is an elementary observation that on every line $\ell$, our function $f$ is either convex or concave. 
Furthermore, if $\ell$ does not go through the origin, and $f$ is convex (concave, respectively) on $\ell$, then by \eqref{hom_eq}, $f$ is convex (concave, resp.) on the open halfplane $(0,\infty)\cdot\ell$. 

Now, let $\ell_1$ and $\ell_2$ be two non-parallel lines in $\R^2$ such that none of them goes through the origin. 
We define $\ell_j^0$ to be the one-dimensional subspace parallel to $\ell_j$ ($j=1,2$). 
The subspace $\ell_j^0$ splits $\R^2$ into two open half planes $U_j^+ := (0,\infty) \cdot \ell_j$ and $U_j^- := (-\infty,0) \cdot \ell_j$. 
We may suppose without loss of generality that $f$ is convex on $\ell_1$, and then of course $f$ is concave on $-\ell_1$. 
Since $f$ is convex on $U_1^+$ and concave on $U_1^-$, we immediately obtain that $f$ has to be linear on one of the half lines $\ell_2\cap U_1^+$ or $\ell_2\cap U_1^-$. 
Without loss of generality, we may assume that $f$ is zero on this half line. 
Since $f$ is homogeneous, we obtain that $f$ is zero on one of the quarter planes $U_1^+\cap U_2^+$ or $U_1^-\cap U_2^+$. 
We denote this domain by $Q$. 
Of course $f$ is also zero on $-Q$. 

Let us consider a third line $\ell_3$, not parallel to the previously fixed ones, not going through the origin, and in addition we demand that $\ell_3\cap Q$ and $\ell_3\cap (-Q)$ are unbounded sets. 
Then $f$ is zero on both $\ell_3\cap Q$ and $\ell_3\cap (-Q)$. 
But this further implies that it must be zero on the whole of $\ell_3$ as well. 
Finally, linearity of our original $f$ on $\R^2$ follows quite easily.
\end{proof}

Now, we are in the position to verify our second main result.

\begin{proof}[Proof of Theorem \ref{main>=3d_thm}]
First, let us suppose that $X$ is not strictly convex. 
Then there exists a two-dimensional subspace $Y$ which is not strictly convex as well. 
By Theorem \ref{main2d_thm}, there are two distinct points $x,y\in Y$ such that we can choose three affine independent points $p_0,p_1,p_2 \in B(x,y)\cap Y$ which satisfy $x,y\in \Conv(\{p_0,p_1,p_2\})$. 
Since, by Lemmas \ref{locstrictconv_lem} and \ref{nonstrictconv_lem}, every line parallel to $\ell(x,y)$ intersects $B(x,y)$ in at least one point, we can choose points $p_3,\dots p_d \in B(x,y)$ such that $p_0,p_1,\dots p_d$ are still affine independent. 
Obviously, the convex hull of these $d+1$ affine independent points contains $x$ and $y$, and thus $X$ does not satisfy property \eqref{SRSd}.

Next, we suppose that $X$ is an inner product space. 
If we have two distinct points $x,y\in X$ and $d+1$ affine independent points $p_0,p_1,\dots p_d \in B(x,y)$ which satisfy $x,y\in\Conv(\{p_0,p_1,\dots p_d\})$, then we must also have $x,y\in \Aff(\{p_0,p_1,\dots p_d\})$. 
But $\Aff(\{p_0,p_1,\dots p_d\}) \cap B(x,y)$ is a $(d-1)$-dimensional affine subspace, and thus it cannot contain $p_0,p_1,\dots p_d$ all together. 
Therefore every inner product space fulfilles property \eqref{SRSd}. 

The only case which was left is when $X$ is strictly convex, but not an inner product space. 
First, we will deal with the case when $d=3$. 
Clearly, it is enough to handle the $\dim X = 3$ case, so we may assume in the sequel that this is satisfied. 
By Theorem \ref{Blaschke_thm} there exists a point $0\neq z\in X$ such that $B(0,z)$ does not lie between two parallel planes. 
We will show that in this case there exist four affine independent points in $B(0,z)$ such that their convex hull contains $0$ and $z$. 

From now on, we will use only linear space properties of $X$. 
By Lemma \ref{locstrictconv_lem}, for every two-dimensional subspace $P$ which contains $z$, the set $B(0,z)\cap P$ is contained in an open region of $P$ bounded by two parallel lines, one of them is $\R\cdot p_P$. 
We consider the set $G$ of all of these lines. 
Let $M$ be any two-dimensional subspace not containing $z$. 
If we consider it as the $(x,y)$ coordinate-plane, and the line $\ell(0,z)$ as the $z$ coordinate-axis, then by Lemma \ref{locstrictconv_lem} the set $G$ can be considered as a graph of a function $f$ which is homogeneous. 
Since $f$ was assumed to be non-linear, by Lemma \ref{convfunct_lem} we can choose $M$, and the $x$- and $y$-axes on it such that they satisfy the following conditions: 
$f(-1,0) = f(0,-1) = 0$, $z = (0,0,1)$, and there are numbers $0 < x_1 < x_2$, $0 < y_2 < y_1$ such that we have either $\alpha_1 := f(x_1,y_1) > 0$ and $\alpha_2 := -f(x_2,y_2) > 0$, or $\alpha_1 < 0$ and $\alpha_2 < 0$ (see Figure 3). 
We will only handle the first case, because the second one is quite the same. 

\begin{figure}
\tdplotsetmaincoords{75}{70}
\begin{tikzpicture}[tdplot_main_coords]
	\draw[thick,->] (-5,0,0) -- (5,0,0) node[anchor=north east]{$x$};
	\draw[thick,->] (0,-5,0) -- (0,5,0) node[anchor=north west]{$y$};
	\draw[thick,->] (0,0,-2) -- (0,0,3) node[anchor=south]{$z$};
	\draw[thick, gray, dashed] (-5,0,1) -- (0,0,1);
	\draw[thick, gray, dashed] (0,-5,1) -- (0,0,1);
	\draw [ultra thin, lightgray, fill=lightgray, opacity=0.4] (0,0,0) -- (0,0,0.98) -- (-5,0,0.98) -- (-5,0,0);
	\draw [ultra thin, lightgray, fill=lightgray, opacity=0.4] (0,0,0) -- (0,0,0.98) -- (0,-5,0.98) -- (0,-5,0);
	\draw [thick] plot [smooth] coordinates {(-5,0,0.6) (-4,0,0.7) (-3,0,0.3) (-2,0,0.4) (-1,0,0.6) (0,0,0.5)};
	\draw [thick] plot [smooth] coordinates {(0,-5,0.6) (0,-4,0.7) (0,-3,0.3) (0,-2,0.4) (0,-1,0.6) (0,0,0.5)};
	\draw[thick, gray, dashed] (0,0,0) -- (2.8,4,1.44);
	\draw[thick, gray, dashed] (0,0,1) -- (2.8,4,2.44);
	\draw [ultra thin, lightgray, fill=lightgray, opacity=0.4] (0,0,0.02) -- (0,0,0.98) -- (2.8,4,2.42) -- (2.8,4,1.46);
	\draw [thick] plot [smooth] coordinates {(0,0,0.5) (0.7,1,0.5) (1.4,2,1.3) (2.1,3,1.8) (2.8,4,2)};
	\draw[thick, gray, dashed] (0,0,0) -- (4,2.8,-1.6);
	\draw[thick, gray, dashed] (0,0,1) -- (4,2.8,-0.6);
	\draw [ultra thin, lightgray, fill=lightgray, opacity=0.4] (0,0,0.02) -- (0,0,0.98) -- (4,2.8,-0.62) -- (4,2.8,-1.58);
	\draw [thick] plot [smooth] coordinates {(0,0,0.5) (1,0.7,-0.1) (2,1.4,-0.2) (3,2.1,-0.8) (4,2.8,-1.2)};
	\draw [ultra thin, gray, fill=lightgray, opacity=0.2] (0,0,0) -- (5,0,0) -- (5,5,0) -- (0,5,0);
	\draw[thin, black, dotted] (2.8,4,2.44) -- (2.8,4,0);
		\draw[fill] (2.8,4,0) circle [radius=1pt];
		\node [right, black] at (2.8,4,0) {$\tilde q_1^t$};
	\draw[thin, black, dotted] (4,2.8,-1.58) -- (4,2.8,0);
		\draw[fill] (4,2.8,0) circle [radius=1pt];
		\node [right, black] at (4,2.8,0) {$\tilde q_2^t$};
	\draw[fill] (0,0,1) circle [radius=1pt];
	\node [above right, black] at (0,0,1) {$(0,0,1)$};
	\draw[fill] (2.8,4,1.44) circle [radius=1pt];
	\node [right, black] at (2.8,4,1.44) {$q_1^t$};
	\draw[fill] (2.8,4,2) circle [radius=1pt];
	\node [right, black] at (2.8,4,2) {$p_1^t$};
	\draw[fill] (4,2.8,-1.6) circle [radius=1pt];
	\node [right, black] at (4,2.8,-1.7) {$q_2^t$};
	\draw[fill] (4,2.8,-1.2) circle [radius=1pt];
	\node [right, black] at (4,2.8,-1.1) {$p_2^t$};
	\draw[fill] (-5,0,0) circle [radius=1pt];
	\node [left, black] at (-5,0,0) {$q_3^t$};
	\draw[fill] (-5,0,0.6) circle [radius=1pt];
	\node [left, black] at (-5,0,0.6) {$p_3^t$};
	\draw[fill] (0,-5,0) circle [radius=1pt];
	\node [above left, black] at (0,-5,0) {$q_4^t$};
	\draw[fill] (0,-5,0.6) circle [radius=1pt];
	\node [above left, black] at (0,-5,0.6) {$p_4^t$};
\end{tikzpicture}
\caption{Four two-dimensional half-region slices of the bisector $B((0,0,0),(0,0,1))$. 
Two of them go above the negative halves of the $x$- and $y$-axes, and the other two are placed above and below the positive quarter $(x,y)$-plane (lighter gray area).}
\end{figure}
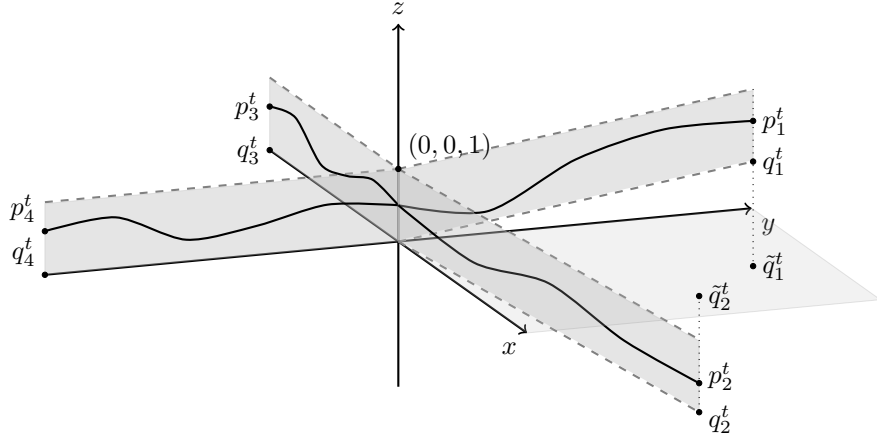

Now, we consider the following points:
\[
q_1^{t} = (tx_1, ty_1, t\alpha_1), \quad q_2^{t} = (tx_2, ty_2, -t\alpha_2), 
\]
\[
q_3^{t} = (-t, 0, 0), \quad q_4^{t} = (0, -t, 0) \qquad (t > 0).
\]
By the properties of the bisector, there exist unique numbers $0 < \vartheta_j^{t} < 1$ $(j=1,2,3,4)$ for every $t\in\R$ such that the following points lie in $B(0,z)$:
\[
p_1^{t} = (tx_1, ty_1, t\alpha_1 + \vartheta_1^{t}), \quad p_2^{t} = (tx_2, ty_2, -t\alpha_2 + \vartheta_2^{t}),
\]
\[ 
p_3^{t} = (-t, 0, \vartheta_3^{t}), \quad p_4^{t} = (0, -t, \vartheta_4^{t}).
\]
If $\lambda_1, \lambda_2, \lambda_3, \lambda_4 \geq 0$ and $\lambda_1 + \lambda_2 + \lambda_3 + \lambda_4 = 1$, then we have
\begin{equation}\label{convhull_eq}
\sum_{j=1}^4 \lambda_j p_j^{t} = \sum_{j=1}^4 \lambda_j q_j^{t} + \left(0,\,0,\, \sum_{j=1}^4 \lambda_j \vartheta_j^{t}\right)
\end{equation}
where $0 < \sum_{j=1}^4 \lambda_j \vartheta_j^{t} < 1$. 

Let us suppose that we managed to prove that the origin is in the interior of the convex hull of $\{q_j^{1}\}_{j=1}^4$. 
Then for large enough $t$-s, the set $\Conv\left(\{q_j^{t}\}_{j=1}^4\right) = \Conv\left(\{t\cdot q_j^1\}_{j=1}^4\right)$ would contain both $(0,0,-1)$ and $(0,0,1)$. 
Thus, by \eqref{convhull_eq}, $(0,0,-1+\theta)$ and $(0,0,1+\tilde\theta)$ would be in the convex hull of $\{p_j^{t}\}_{j=1}^4$ with some numbers $\theta,\tilde{\theta}\in(0,1)$. 
This, by convexity, would further imply that $(0,0,0), (0,0,1) \in \Conv(\{p_1^{t}, \dots p_4^{t}\})$, which would complete the present case. 
But the following points are clearly in the interior of the convex hull of $\{q_j^{1}\}_{j=1}^4$:
\[
\mu(\nu q_3^1 + (1-\nu) q_4^1) + (1-\mu) \left(\frac{\frac{1}{\alpha_1}}{\frac{1}{\alpha_1}+\frac{1}{\alpha_2}} q_1^1 + \frac{\frac{1}{\alpha_2}}{\frac{1}{\alpha_1}+\frac{1}{\alpha_2}} q_2^1\right)
\]
\[
= \mu(\nu q_3^1 + (1-\nu) q_4^1) + (1-\mu) \left(\frac{\frac{1}{\alpha_1}}{\frac{1}{\alpha_1}+\frac{1}{\alpha_2}} \tilde q_1^1 + \frac{\frac{1}{\alpha_2}}{\frac{1}{\alpha_1}+\frac{1}{\alpha_2}} \tilde q_2^1\right)
\]
for every $\mu,\nu\in (0,1)$, where $\tilde q_1^1 = (x_1, y_1, 0)$ and $\tilde q_2^1 = (x_2, y_2, 0)$. 
Obviously, we can find $\mu, \nu \in (0,1)$ such that the above point is exactly the origin, which completes this case.

Finally, we have to deal with the case when $d > 3$, the space $X$ is assumed to be strictly convex, and it is not an inner product space. 
We may assume without loss of generality that $\dim X = d$. 
Let us restrict ourselves into a three-dimensional subspace $Y$ for which the inherited norm is not Euclidean. 
Then by the $d=3$ case, we can find two points $x,y\in Y$, and four affine independent points in $B(x,y)\cap Y$ such that their convex hull contains $x$ and $y$. 
Since every line parallel to $\ell(x,y)$ intersects $B(x,y)$ in one point,  we can choose $d-3$ other points in $B(x,y)$ such that together with the previously mentioned four points they are still affine independent. 
Clearly, the convex hull of these $d+1$ affine independent points contains $x$ and $y$, and therefore our proof is completed.
\end{proof}

As was mentioned in the introduction, the characterization of Euclidean spaces given in \cite{KaSt} is a consequence of our results. 
In fact, we have a stronger characterization. 
We point out that the following characterization does not depend on $d$, unlike in our main results.

\begin{cor}\label{KaSt_cor}
Let $d\geq 2$. 
A $d$-dimensional normed space is Euclidean if and only if every set of $d+1$ affine independent points is a resolving set for the whole space.
\end{cor}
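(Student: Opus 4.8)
We prove the two implications separately; the forward direction is elementary, and the converse is where the content sits, relying on the bisector machinery developed above.

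For the ``only if'' direction I would argue directly in the inner product. Assume $X$ is Euclidean with inner product $\langle\cdot,\cdot\rangle$, fix affine independent points $p_0,\dots,p_d$, and suppose $\|x-p_j\|=\|y-p_j\|$ for $j=0,\dots,d$. Expanding the squared norms and subtracting the equation for $j=0$ from the one for each $j$ gives $\langle x-y,\,p_j-p_0\rangle=0$ for $j=1,\dots,d$; since $p_1-p_0,\dots,p_d-p_0$ is a basis of $X$, this forces $x=y$. (Equivalently: each $B(p_0,p_j)$ is a hyperplane with normal $p_j-p_0$, and $d$ such hyperplanes with linearly independent normals intersect in a single point.)

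For the ``if'' direction, suppose every set of $d+1$ affine independent points resolves the whole of $X$. The first step is to observe that this pins every bisector into a hyperplane: if $B(x,y)$ (with $x\ne y$) were \emph{not} contained in any hyperplane, then $\Aff(B(x,y))=X$, so $B(x,y)$ would contain $d+1$ affine independent points $p_0,\dots,p_d$, and then $\|x-p_j\|=\|y-p_j\|$ for all $j$ while $x\ne y$, contradicting that $\{p_0,\dots,p_d\}$ resolves $X$. When $d\ge3$ this already finishes the proof: a subset of a hyperplane lies (trivially) between two parallel hyperplanes, so Theorem \ref{Blaschke_thm} gives that the norm is Euclidean. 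The only remaining case is $d=2$. There ``contained in a hyperplane'' merely means ``contained in a line'', which a priori is weaker than Euclidean (the hypothesis does give strict convexity via Theorem \ref{main2d_thm}, but that is not enough). So in this case I would first upgrade ``$B(x,y)\subseteq\ell$ for a line $\ell$'' to ``$B(x,y)=\ell$'': by Lemma \ref{locstrictconv_lem}(ii) or Lemma \ref{nonstrictconv_lem}(ii), according to whether $\partial\overline{B}$ carries a nondegenerate segment parallel to $\ell(x,y)$, the bisector meets every line parallel to $\ell(x,y)$; a single line $\ell$ can meet all of them only if it is transversal to $\ell(x,y)$, in which case $\ell$ meets each such parallel in exactly one point, every one of which lies in $B(x,y)$, forcing $B(x,y)=\ell$. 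Hence every bisector of $X$ is a line, and Lemma \ref{bisectline_lem} yields that the norm is Euclidean.

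The main obstacle here is conceptual rather than computational: one must recognise that the hypothesis is only marginally stronger than property \eqref{SRSd} — just strong enough to force each bisector into a genuine hyperplane rather than a mere slab, which is what breaks the two-dimensional strict-convexity counterexamples — and then, for $d=2$, bridge the gap between ``bisector on a line'' and the Euclidean conclusion by combining the ``every parallel line is met'' lemmas with Lemma \ref{bisectline_lem}. Everything else is routine, and in particular for $d\ge3$ the argument is immediate from Theorem \ref{Blaschke_thm}.
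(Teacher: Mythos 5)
Your proof is correct and follows essentially the same route as the paper: the necessity part by direct expansion in the inner product, and the sufficiency part by noting that the hypothesis confines every bisector to a hyperplane and then invoking Theorem \ref{Blaschke_thm} for $d\geq 3$ and Lemma \ref{bisectline_lem} (after upgrading ``contained in a line'' to ``equal to a line'' via Lemmas \ref{locstrictconv_lem}(ii) and \ref{nonstrictconv_lem}(ii)) for $d=2$. The paper states this in two sentences; you have merely supplied the details it leaves implicit.
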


\begin{proof}
The necessity part is trivial. 
Concerning the sufficiency part, if $d>2$, then this is an easy consequence of Theorem \ref{main>=3d_thm} (or Theorem \ref{Blaschke_thm}). 
If $d=2$, then an easy application of Lemma \ref{bisectline_lem} completes the proof.
\end{proof}

We close this section with the following characterization of (at least three dimensional) ellipsoids. 
It follows easily from Theorem \ref{main>=3d_thm}. 

\begin{cor}\label{>=3_cor}
Let $m \geq d \geq 3$, and $K$ be a convex, compact body in $\R^m$ with non-empty interior such that $K = -K$. 
The following two conditions are equivalent:
\begin{itemize}
\item[\textup{(i)}] for every $d$ numbers $\lambda_1,\dots \lambda_d\in(0,\infty)$, and linearly independent vectors $v_1, \dots v_d \in \R^m$, the intersection 
\[
\Conv(\{0, v_1, \dots v_d\}) \cap (\partial K) \cap \left( \cap_{j=1}^d (v_j + \lambda_j \cdot \partial K) \right)
\] 
contains at most one element,
\item[\textup{(ii)}] $K$ is an ellipsoid. 
\end{itemize}
\end{cor}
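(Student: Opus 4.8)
The plan is to recognize that condition (i), after a translation and a rescaling, is exactly the assertion that the normed space $X := (\R^m, \|\cdot\|_K)$ satisfies property \eqref{SRSd}, and then to quote Theorem \ref{main>=3d_thm}. Here $\|\cdot\|_K$ denotes the Minkowski functional of $K$, so that $\partial K = \{x\in\R^m : \|x\|_K = 1\}$ and $v + \lambda\,\partial K = \{x : \|x-v\|_K = \lambda\}$ for $\lambda>0$. I would begin by recording the bookkeeping facts that $\{0,v_1,\dots,v_d\}$ is affine independent exactly when $v_1,\dots,v_d$ are linearly independent, that $\dim X = m \geq d \geq 3$ so that \eqref{SRSd} and Theorem \ref{main>=3d_thm} apply to $X$, and that $K$ is an ellipsoid if and only if $\|\cdot\|_K$ comes from an inner product.

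The direction $(ii)\Rightarrow(i)$ I would dispatch directly: if $K$ is an ellipsoid then $X$ is an inner product space, hence satisfies \eqref{SRSd} by Theorem \ref{main>=3d_thm}; given any two points $x_1,x_2$ in the displayed intersection, they satisfy $\|x_i\|_K = 1$ and $\|x_i-v_j\|_K = \lambda_j$ for $j=1,\dots,d$, so they are two points of the simplex $\Conv(\{0,v_1,\dots,v_d\})$ equidistant from all $d+1$ of its vertices, whence $x_1 = x_2$ by property \eqref{SRSd}. Thus the intersection has at most one element.

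For $(i)\Rightarrow(ii)$ I would argue the contrapositive via a translation-and-dilation reduction. Assuming $K$ is not an ellipsoid, $X$ is not an inner product space, so by Theorem \ref{main>=3d_thm} $X$ fails \eqref{SRSd}, i.e.\ some set of $d+1$ affine independent points does not resolve its convex hull; since translations preserve convex hulls and all pairwise distances, I may take this set to be $\{0,v_1,\dots,v_d\}$ with $v_1,\dots,v_d$ linearly independent, obtaining distinct $s_1,s_2 \in \Conv(\{0,v_1,\dots,v_d\})$ with $\|s_1\|_K = \|s_2\|_K =: \rho_0$ and $\|s_1-v_j\|_K = \|s_2-v_j\|_K =: \rho_j$ for all $j$. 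Here $\rho_0 > 0$ (else $s_1 = s_2 = 0$) and each $\rho_j > 0$ (else $s_1 = s_2 = v_j$), so I can dilate by $1/\rho_0$: with $\tilde v_j := v_j/\rho_0$, $\lambda_j := \rho_j/\rho_0 > 0$, $x_i := s_i/\rho_0$, the points $x_1 \neq x_2$ both lie in $\Conv(\{0,\tilde v_1,\dots,\tilde v_d\}) \cap \partial K \cap \bigl(\cap_{j=1}^d (\tilde v_j + \lambda_j\,\partial K)\bigr)$, so (i) fails for the (still linearly independent) data $\tilde v_1,\dots,\tilde v_d$ together with $\lambda_1,\dots,\lambda_d$. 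I do not anticipate a genuine obstacle here; the only points demanding care are the degenerate cases excluded in the dilation step and the (easy) observation that "resolves its convex hull" is invariant under translation and positive dilation.
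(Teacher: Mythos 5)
Your proposal is correct and follows exactly the route the paper intends: the paper gives no written proof beyond the remark that the corollary ``follows easily from Theorem~\ref{main>=3d_thm}'', and your translation of condition (i) into property \eqref{SRSd} for the Minkowski norm $\|\cdot\|_K$, together with the dilation argument handling the normalization $\lambda_0=1$ and the degenerate radii, is the correct way to make that remark precise.
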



\section{Final remarks, open problems}

We close this paper with some discussions. 
First, let us consider a complex normed space $(Y,\|\cdot\|)$ with dimension at least $d$ ($d\in\N$, $d>2$).
Then $Y$ can be considered as a real normed space as well with the same norm.
By the Jordan--von Neumann theorem $\|\cdot\|$ comes from a complex inner product on $Y$ if and only if $\|\cdot\|$ satisfies \eqref{par_eq}.
But this holds exactly when $\|\cdot\|$ comes from a real inner product, which is equivalent to \eqref{SRSd}.
Therefore we can obtain a characterization of complex inner product spaces of dimension at least three as well.
However in that case, one should be aware that in the definition of \eqref{SRSd} affine independence is real affine independence, and also convex hull is real convex hull.

Next, we say that the metric dimension of $X$ is $\delta \in \N$ if there exists a resolving set for $X$ with $\delta$ elements, but there is no resolving set with less elements. 
The metric dimension of a $d$-dimensional Euclidean space is clearly $d+1$. 
From our results it is clear that usually a set of $d+1$ affine independent points is not a resolving set for a $d$-dimensional normed space. 
Therefore it is reasonable to ask the following question.

\begin{problem}
Is the metric dimension of every $d$-dimensional (strictly convex) normed space less than or equal to $d+1$?
\end{problem}

The reason why we have asked "less than or equal to" is that it is not clear whether the metric dimension of a $d$-dimensional normed space can be less than $d+1$. 
However, it is quite plausible that such a phenomena cannot happen. 

Let us consider a convex, compact body $K$ in $\R^d$ with non-empty interior. 
Let us point out that the boundary $\partial K$ resolves the whole space $\R^d$ with respect to every strictly convex norm. 
The reason is that every line going through an inner point of $K$ intersects the boundary in at least two points. 
Therefore, by Lemma \ref{locstrictconv_lem}, the boundary of $K$ cannot be a subset of a bisector. 

It can be also seen that $\partial K$ resolves $K$ (but not necessarily the whole space) with respect to every (not necessarily strictly convex) norm. 
In fact, this is an easy consequence of the fact that for any two different points $x, y \in K$, the line $\ell(x,y)$ intersects $\partial K$ in at least two points, but the set $B(x,y)\cap\ell(x,y)$ has exactly one element which is $\tfrac{1}{2}(x+y)$. 

The following question is motivated by the above observations.

\begin{problem}
Find (general enough) conditions on convex compact sets of $\R^d$ such that whenever they are satisfied by a set, then the extreme points resolves this set (or the whole space) with respect to every (or every strictly convex) norm.
\end{problem}

Of course, usually $\partial K$ is much smaller than the set of all extreme points of $K$. 

In this paper we considered only distances which were induced by norms. 
Of course we may consider other natural metrics on linear spaces.

\begin{problem}
Characterize metrics in other natural classes of metrics on $\R^d$ (or on general linear spaces) such that every set of $d+1$ affine independent points is a resolving set for their convex hull. 
\end{problem}

Motivated by Corollaries \ref{2_cor} and \ref{>=3_cor}, it would be interesting to answer the following question. 

\begin{problem}
Characterize those (not necessarily centrally symmetric) convex, compact bodies $K$ in $\R^m$ ($d\leq m$) with non-empty interior such that for every $d$ numbers $\lambda_1,\dots \lambda_d\in(0,\infty)$, and linearly independent vectors $v_1, \dots v_d \in \R^m$, the intersection 
\[
\Conv(\{0, v_1, \dots v_d\}) \cap (\partial K) \cap \left( \cap_{j=1}^d (v_j + \lambda_j \cdot \partial K) \right)
\] 
has at most one element. 
\end{problem}

Several other reasonable problems could be raised concerning this direction. 
We hope that our results will inspire further investigations.


\section*{Acknowledgement}
The author emphasizes his thanks to Gerg\H{o} Nagy who posed this problem in a personal conversation.

The author was supported by the "Lend\"ulet" Program (LP2012-46/2012) of the Hungarian Academy of Sciences.


\bibliographystyle{amsplain}

\end{document}